\documentclass[11pt,a4paper]{article}

\usepackage[svgnames]{xcolor}

\usepackage{amsmath, amssymb, amsthm, amsfonts}

\usepackage[all]{xy}

\usepackage[margin=2.5cm]{geometry}
\usepackage{enumitem}

\usepackage{enumitem}
\setenumerate[1]{itemsep=0pt,partopsep=0pt,parsep=\parskip,topsep=3pt}
\setitemize[1]{itemsep=0pt,partopsep=0pt,parsep=\parskip,topsep=3pt}
\setdescription{itemsep=0pt,partopsep=0pt,parsep=\parskip,topsep=3pt}
\setlist[itemize]{leftmargin=35pt}
\setlist[enumerate]{leftmargin=35pt}

\theoremstyle{plain}
\newtheorem{theorem}{Theorem}[section]
\newtheorem{proposition}[theorem]{Proposition}
\newtheorem{lemma}[theorem]{Lemma}

\theoremstyle{definition}
\newtheorem{definition}[theorem]{Definition}
\newtheorem{example}[theorem]{Example}  
\newtheorem{remark}[theorem]{Remark}

\DeclareMathOperator{\module}{mod}
\renewcommand{\mod}{\module} 
\DeclareMathOperator{\Hom}{Hom}

\DeclareMathOperator{\End}{End}

\usepackage{hyperref}
\hypersetup{
    colorlinks=true,   
    citecolor=red,     
    linkcolor=blue,    
    urlcolor=black     
}

\begin{document}
\title{\bf Projectively Wakamatsu Tilting Modules over One-Point Extensions}
\author{
Dajun Liu$^{1}$, Jiaxuan Feng$^{2}$, Hanpeng Gao$^{3,*}$\\
{\footnotesize  $^{1,2}$School of Mathematics-Physics and Finance, Anhui Polytechnic University,  Wuhu 241000 Anhui, P. R. China;}\\
{\footnotesize  $^{3}$School of Mathematical Sciences, Anhui University, Hefei 230601, P. R. China;}\\
{\footnotesize 1. E-mail: liudajun@ahpu.edu.cn; ORCID: \href{https://orcid.org/0009-0001-6073-7587}{0009-0001-6073-7587}; }\\
{\footnotesize 2. E-mail: 1254708283@qq.com;}\\
{\footnotesize 3. E-mail: hpgao@ahu.edu.cn.}}
\date{}
\maketitle
\footnotetext{
Supported by the National Natural Science Foundation of China (No.12101003,12301041), the Natural Science Foundation of Anhui province (No.2108085QA07).\\
* Corresponding author.\\  
}

\begin{abstract}
Let $\Gamma = \Lambda[M]$ be the one-point extension of an algebra $\Lambda$ by a $\Lambda$-module $M$. We establish a  method to lift projectively Wakamatsu tilting (PWT) modules from $\mathrm{mod}\,\Lambda$ to $\mathrm{mod}\,\Gamma$ by adding the new projective module, and prove that this lifting process perfectly preserves  mutation relations under certain homological conditions. Furthermore, for source point extensions of representation-finite algebras,   we obtain a complete classification of PWT $\Gamma$-modules in terms of those over $\Lambda$. In particular, we establish a bijection
\[
\mathrm{PWT}(\Gamma) \longleftrightarrow \mathrm{PWT}(\Lambda) \coprod \mathrm{RPWT}(\Lambda, S_i).
\]
which yields the counting formula about $|\mathrm{PWT}(\Gamma)|$.

\end{abstract}

\textbf{Keywords}: projectively Wakamatsu tilting modules, one-point extensions, representation-finite algebras,Wakamatsu tilting modules.

\textbf{MR(2020) Subject Classification}: 16G10
\maketitle

\section{Introduction}

Tilting theory plays a central role in the representation theory of Artin algebras. 
The notion of a tilting module has been extensively generalized and developed in various directions. One of the most prominent generalizations is the $\tau$-tilting theory introduced by Adachi, Iyama, and Reiten \cite{AIR14}, which completes the classical tilting mutation and connects closely to cluster algebras. Another important generalization is given by Wakamatsu tilting modules, introduced by Wakamatsu~\cite{Wa90}, 
which provide a powerful framework in the study of derived equivalences and Gorenstein homological algebra.

Recently, Enomoto \cite{Eno23} introduced a subclass of Wakamatsu tilting modules called \emph{projectively Wakamatsu tilting} (PWT) modules. A self-orthogonal module $T$ is projectively Wakamatsu tilting if it is exactly an $\mathrm{Ext}$-progenerator for its right perpendicular category $T^{\perp_{>0}}$. Enomoto proved that for representation-finite algebras, PWT modules coincide with Wakamatsu tilting modules as well as with maximal self-orthogonal modules. 
This result provides a new perspective on the structure of self-orthogonal modules and motivates further investigation of PWT modules.

One-point extensions of algebras form a classical and powerful inductive tool for constructing new algebras from given ones. The behavior of tilting modules under one-point extensions was extensively studied by Assem, Happel, and Trepode \cite{AHT07}. More recently, Suarez \cite{Sua18} generalized this approach to support $\tau$-tilting modules using restriction and extension functors. Motivated by these developments, this paper aims to study the construction of PWT modules over one-point extensions.

Let $\Lambda$ be an algebra and $M \in \mathrm{mod}\,\Lambda$. Let $\Gamma = \Lambda[M]$ be the one-point extension algebra of $\Lambda$ by $M$. The module categories $\mathrm{mod}\,\Lambda$ and $\mathrm{mod}\,\Gamma$ are connected by a pair of adjoint functors: the exact restriction functor $\mathcal{R}$ and the fully faithful extension functor $\mathcal{E}$. Let $P_a$ and $S_a$ denote the new indecomposable projective and simple $\Gamma$-modules, respectively, associated with the extension vertex $a$.

Our first main result provides a method to lift a PWT module from $\Lambda$ to $\Gamma$.

\vspace{0.2cm}
\noindent \textbf{Theorem A.} (See Theorem \ref{thm:main_construction_proj}) { Let $\Gamma = \Lambda[M]$ be the one-point extension algebra of $\Lambda$ by $M$ and $U$ be a PWT module in $\mathrm{mod}\,\Lambda$.  If $M \in U^{\perp_\Lambda}$, then $\widetilde{U} = \mathcal{E}U \oplus P_a$ is a PWT module in $\mathrm{mod}\,\Gamma$.}
\vspace{0.2cm}

We further show that mutation relations between PWT modules are preserved under suitable homological conditions.

\vspace{0.2cm}
\noindent \textbf{Theorem B.} (See Theorem \ref{thm:mutation_preservation}) {Let $\Gamma = \Lambda[M]$ be the one-point extension of $\Lambda$ by $M \in \operatorname{mod} \Lambda$. 
	Suppose $U$  is a PWT module over $\Lambda$ and $U^{\prime}$ is a left mutation of $U$. If $M\in U^{\perp_{\Lambda}}\cap (U^{\prime})^{\perp_{\Lambda}}$, then
	$\widetilde{U}^{\prime}=\mathcal{E}U^{\prime}\oplus P_{a}$ is a left mutation of $\widetilde{U}=\mathcal{E}U\oplus P_{a}$ in $\mathrm{mod}\,\Gamma$.}

\vspace{0.2cm}
Finally, for source point extensions of representation-finite algebras, we obtain a complete classification of PWT modules.

\vspace{0.2cm}
\noindent \textbf{Theorem C.} (See Theorem \ref{thm:bijection}) {Let $\Gamma = \Lambda[S_i]$ be a source point extension of a representation-finite Artin algebra $\Lambda$. Then there exists a  bijection:}
\[
\mathrm{PWT}(\Gamma) \longleftrightarrow \mathrm{PWT}(\Lambda) \coprod \mathrm{RPWT}(\Lambda, S_i).
\]
{In particular, we have}
\[
|\mathrm{PWT}(\Gamma)| = |\mathrm{PWT}(\Lambda)| + |\mathrm{RPWT}(\Lambda, S_i)|.
\]

This result reveals a precise combinatorial and homological relationship between PWT modules over $\Lambda$ and those over its one-point extensions.

Throughout this paper, let $k$ be an algebraically closed field and let $\Lambda$ be a finite-dimensional $k$-algebra. We denote by $\operatorname{mod} \Lambda$ the category of finitely generated right $\Lambda$-modules.
For a module $M \in \operatorname{mod} \Lambda$, we denote by $\operatorname{add} M$ the full subcategory of $\operatorname{mod} \Lambda$ consisting of all direct summands of finite direct sums of $M$. We write $\operatorname{Ext}^{>0}_\Lambda(X, Y) = 0$ to indicate that $\operatorname{Ext}^i_\Lambda(X, Y) = 0$ for all integers $i > 0$.

The paper is organized as follows. 
In Section~\ref{sec2}, we recall basic notions on PWT modules and one-point extensions. 
In Section~\ref{sec3}, we establish the main results, including lifting constructions, mutation preservation, and classification results. 
In Section~\ref{sec4}, we provide examples to illustrate our theory.

\section{Preliminaries}\label{sec2}

In this section, we recall the basic definitions and fundamental properties of PWT modules and one-point extensions of algebras. 

\subsection{Projectively Wakamatsu Tilting Modules}

We begin by recalling the relevant concepts from Wakamatsu tilting theory, recently refined by Enomoto \cite{Eno23}.

A $\Lambda$-module $T$ is called \emph{self-orthogonal} if $\operatorname{Ext}^{>0}_\Lambda(T, T) = 0$. For a self-orthogonal module $T$,
\[
T^{\perp_\Lambda} = \{ X \in \operatorname{mod} \Lambda \mid \operatorname{Ext}^{>0}_\Lambda(T, X) = 0 \}.
\]
By standard homological algebra, $T^{\perp}$ is a coresolving subcategory of $\operatorname{mod} \Lambda$, meaning it is closed under extensions, direct summands, and cokernels of monomorphisms. Dually, we can define $^{\perp_\Lambda}T$.

Let $\mathcal{C}$ be a full subcategory of $\operatorname{mod} \Lambda$ closed under extensions and direct summands. An object $P \in \mathcal{C}$ is said to be \emph{Ext-projective} in $\mathcal{C}$ if $\operatorname{Ext}^1_\Lambda(P, \mathcal{C}) = 0$. We say that $P$ is an \emph{Ext-progenerator} of $\mathcal{C}$ if $P$ is Ext-projective in $\mathcal{C}$ and for every object $X \in \mathcal{C}$, there exists a short exact sequence in $\operatorname{mod} \Lambda$:
\[
0 \longrightarrow K \longrightarrow P_0 \longrightarrow X \longrightarrow 0
\]
with $P_0 \in \operatorname{add} P$ and $K \in \mathcal{C}$.

With these notions, we recall the central definition of this paper:

\begin{definition}\label{def:tilting_wakamatsu}
Let $T$ be a $\Lambda$-module.
\begin{enumerate}[label=(\roman*)]
    \item $T$ is called a \emph{tilting module} if it satisfies three conditions: 
    
    (1) its projective dimension is finite ($\operatorname{pd}_\Lambda T < \infty$);
    
     (2) $T$ is self-orthogonal; 
     
     (3) there exists a finite exact sequence
    \[
    0 \longrightarrow \Lambda \longrightarrow T_0 \longrightarrow T_1 \longrightarrow \dots \longrightarrow T_d \longrightarrow 0
    \]
    with $T_i \in \operatorname{add} T$ for all $0 \le i \le d$.

    \item Let $T$ be a self-orthogonal $\Lambda$-module. $T$ is called \emph{projectively Wakamatsu tilting} (PWT) if $T$ is an Ext-progenerator of $T^{\perp}$.

    \item $T$ is called a \emph{Wakamatsu tilting module} if $T$ is self-orthogonal and there exists an infinite exact sequence
    \[
    0 \longrightarrow \Lambda \xrightarrow{f_0} T_0 \xrightarrow{f_1} T_1 \xrightarrow{f_2} \dots
    \]
    with $T_i \in \operatorname{add} T$, such that $\operatorname{Ext}^{>0}_\Lambda(\operatorname{Im} f_i, T) = 0$ for all $i \ge 0$.
\end{enumerate}
\end{definition}

\begin{remark}
The definition of PWT modules is deeply motivated by a foundational theorem of Auslander and Reiten \cite[Theorem 5.5]{AR91}. They established that if $T$ is a classical tilting module (and thus has finite projective dimension), then its perpendicular category $T^{\perp_\Lambda}$ is covariantly finite, and $T$ is necessarily an Ext-progenerator of $T^{\perp_\Lambda}$. For a Wakamatsu tilting module $T$, let $\mathcal{Y}_T$ be the subcategory consisting of modules $Y$ that admit an infinite exact sequence $\dots \to T_1 \to T_0 \to Y \to 0$ with $T_i \in \operatorname{add} T$ such that all kernels belong to $T^{\perp_\Lambda}$. For a PWT module $T$, the equality $\mathcal{Y}_T = T^{\perp_\Lambda}$ holds. Since injective modules lie in $T^{\perp_\Lambda}$, the injective cogenerator $D\Lambda$ belongs to $\mathcal{Y}_T$, which precisely satisfies the definition of a Wakamatsu tilting module (see \cite[Proposition 2.2]{BS98} or \cite[Proposition 2.8]{Eno23}). So,
\[
\{ \text{Tilting} \} \subseteq \{ \text{Projectively Wakamatsu tilting} \} \subseteq \{ \text{Wakamatsu tilting} \}.
\]
\end{remark}

An Artin algebra $\Lambda$ is said to be \emph{representation-finite} if there exist only finitely many isomorphism classes of finitely generated indecomposable $\Lambda$-modules.

\begin{lemma}[{\cite[Theorem 3.22]{Eno23}}]\label{lem:pwt_max_orthogonal}
Let $\Lambda$ be a representation-finite Artin algebra. A $\Lambda$-module $T$ is PWT if and only if $T$ is a maximal self-orthogonal module (i.e., $T$ is self-orthogonal and $|T| = |\Lambda|$) if and only if $T$ is a Wakamatsu tilting module.
\end{lemma}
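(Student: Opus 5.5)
The plan is to route all three conditions through the category $\mathcal{Y}:=T^{\perp_\Lambda}$ and its Ext-projective objects. Let $T$ be self-orthogonal. As recalled above, $\mathcal{Y}$ is coresolving and contains $D\Lambda$. Since $\Lambda$ is representation-finite, $\mathcal{Y}=\operatorname{add} Y$ for a single module $Y$, so $\mathcal{Y}$ is functorially finite.

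The first step is to invoke the Auslander--Reiten theory of coresolving contravariantly finite subcategories \cite{AR91}, in its dual form. This should give the following. Let $\omega$ be the class of Ext-projective objects of $\mathcal{Y}$. Then $\omega=\operatorname{add}W$ for a module $W$ with $|W|=|\Lambda|$, and every $X\in\mathcal{Y}$ admits a sequence $0\to X'\to W_0\to X\to 0$ with $W_0\in\operatorname{add}W$ and $X'\in\mathcal{Y}$. In other words, $W$ is an Ext-progenerator of $\mathcal{Y}$.

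To prove $|W|=|\Lambda|$ I would argue as follows. Iterate the approximation sequences starting from $D\Lambda$. This shows that $W$ is itself self-orthogonal, that $W^{\perp}=\mathcal{Y}$, and that $W$ is Wakamatsu tilting. I would then use the standard fact that a Wakamatsu tilting module $W$ has $|W|=|\Lambda|$ when $\operatorname{End}(W)$ is finite-dimensional. That fact comes from the double-centralizer property: $W$ is also Wakamatsu tilting over $\operatorname{End}(W)$, and one compares Grothendieck groups using the finite resolutions available in the representation-finite case.

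Granting this, the equivalences follow. Since $T$ is self-orthogonal, $T\in\mathcal{Y}$ and $\operatorname{Ext}^1(T,\mathcal{Y})=0$, so $\operatorname{add}T\subseteq\omega$. By definition, $T$ is PWT exactly when $\operatorname{add}T=\omega$, because an Ext-progenerator of $\mathcal{Y}$ is unique up to $\operatorname{add}$. Hence PWT is equivalent to $|T|=|\omega|=|\Lambda|$, which is maximal self-orthogonality. For maximality in the inclusion sense, any self-orthogonal $T'\supseteq T$ still satisfies $T'\in\omega$, so $|T'|\le|\Lambda|$, and $|T|=|\Lambda|$ indeed means $T$ is maximal.

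The implication PWT $\Rightarrow$ Wakamatsu tilting is the Remark above. For the converse, let $T$ be Wakamatsu tilting. I would show $\omega\subseteq\operatorname{add}T$ as follows. Take $W'\in\omega$ and apply the Wakamatsu coresolution $0\to\Lambda\to T_0\to T_1\to\cdots$. The cokernels lie in ${}^{\perp}T$, and they also lie in ${}^{\perp}\mathcal{Y}$ by dimension shifting, since all $T_i$ are Ext-projective in $\mathcal{Y}$. Next take a projective cover $P\to W'$. Combining these with Ext-projectivity of $W'$, I expect to realise $W'$ as a summand of a module in $\operatorname{add}T$.

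A cleaner alternative is to use the Wakamatsu tilting version of the rank equality directly, namely $|T|=|\Lambda|$ for Wakamatsu tilting $T$ over a representation-finite algebra. Combined with $\operatorname{add}T\subseteq\omega$ and $|\omega|=|\Lambda|$, this forces $\operatorname{add}T=\omega$.

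The main obstacle I expect is the counting statement $|\omega|=|\Lambda|$, equivalently the rank equality for Wakamatsu tilting modules. In general this is open, related to the Wakamatsu tilting conjecture, so representation-finiteness must enter essentially. I would first try to use that every subcategory is functorially finite here. That lets one take $\mathcal{Y}$-approximations and build a bijection between the indecomposable Ext-projectives of $\mathcal{Y}$ and the indecomposable projective $\Lambda$-modules, via minimal left $\mathcal{Y}$-approximations $P\to Y_P$ of the indecomposable projectives $P$, followed by a Krull--Schmidt argument.
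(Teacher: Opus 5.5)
The paper does not prove this lemma; it is quoted from \cite[Theorem 3.22]{Eno23}. Judged on its own, your framework is sound. With $\mathcal{Y}=T^{\perp_\Lambda}$ and $\omega$ its Ext-projectives, self-orthogonality gives $\operatorname{add}T\subseteq\omega$. Once $\omega=\operatorname{add}W$ is known to be an Ext-progenerator of $\mathcal{Y}$ with $|W|=|\Lambda|$, you get PWT $\Leftrightarrow\operatorname{add}T=\omega\Leftrightarrow|T|=|\Lambda|$.

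The Ext-progenerator property should be proved rather than attributed to \cite{AR91}, but it is easy. By Wakamatsu's lemma the minimal left $\mathcal{Y}$-approximation $\Lambda\to Y^{\Lambda}$ is monic with cokernel in ${}^{\perp_1}\mathcal{Y}$. Hence $Y^{\Lambda}\in\omega$, and every $Y\in\mathcal{Y}$ is a quotient of a module in $\operatorname{add}Y^{\Lambda}$. The kernel of a right $\omega$-approximation then lies in $T^{\perp_\Lambda}$ by the computation in the proof of Theorem~\ref{thm:main_construction_proj}, using $T\in\omega$.

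The real gap is the count $|\omega|=|\Lambda|$, which is the whole content of the lemma. The ``standard fact'' you invoke is not one. $\operatorname{End}(W)$ is always finite-dimensional here, so you are assuming the rank equality for Wakamatsu tilting modules, which you yourself note is open. There is also no Grothendieck-group comparison available: representation-finiteness does not make the relevant $\operatorname{add}W$-(co)resolutions finite, since $W$ need not have finite projective or injective dimension.

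Your fallback bijection is false. In Example~\ref{ex:4.1} take $T=U_3=D\Lambda=P_2\oplus P_3\oplus S_2$. Since $\operatorname{Ext}^1_\Lambda(S_2,S_3)\neq0$ and $\operatorname{Ext}^2_\Lambda(S_2,S_4)\cong\operatorname{Ext}^1_\Lambda(S_3,S_4)\neq0$, one gets $\mathcal{Y}=\omega=\operatorname{add}D\Lambda$. The minimal left $\mathcal{Y}$-approximations of the indecomposable projectives are $\mathrm{id}_{P_2}$, $\mathrm{id}_{P_3}$ and $S_4\hookrightarrow P_3$. So $P_3$ is hit twice, and the Ext-projective $S_2$ is not a summand of any $Y^{P}$.

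The missing idea is relative Auslander--Reiten theory. Since $\Lambda$ is representation-finite, $\mathcal{Y}$ is extension-closed and functorially finite with finitely many indecomposables. By Auslander--Smal\o{} it has relative almost split sequences, and the relative translate $\tau_{\mathcal{Y}}$ is a bijection from indecomposable non-Ext-projectives to indecomposable non-Ext-injectives of $\mathcal{Y}$. Finiteness then forces equally many indecomposable Ext-projectives and Ext-injectives. Because $\mathcal{Y}$ is coresolving, its Ext-injectives are exactly $\operatorname{add}D\Lambda$, since an Ext-injective $E$ splits off $0\to E\to I(E)\to\Omega^{-1}E\to0$. Hence $|\omega|=|\Lambda|$; this is where representation-finiteness genuinely enters.

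The implication Wakamatsu tilting $\Rightarrow$ PWT has its own gap. Dimension shifting along $0\to C_i\to T_i\to C_{i+1}\to0$ only gives $\operatorname{Ext}^j(C_i,Y)\cong\operatorname{Ext}^{j+1}(C_{i+1},Y)$. This is an infinite chain with no vanishing end, so $C_i\in{}^{\perp}\mathcal{Y}$ does not follow. Your ``cleaner alternative'' assumes $|T|=|\Lambda|$ for Wakamatsu tilting $T$, which is the implication to be proved. A fix is to run the same count on the subcategory $\mathcal{X}_T$ of modules admitting $\operatorname{add}T$-coresolutions with cokernels in ${}^{\perp}T$. For Wakamatsu tilting $T$ this is resolving, with Ext-projectives $\operatorname{add}\Lambda$ and Ext-injectives $\operatorname{add}T$, so $|T|=|\Lambda|$ and your first part applies.

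Finally, a self-orthogonal $T'$ containing $T$ as a summand need not lie in $\omega$. Take $T=P_2\oplus P_3$ and $T'=U_3$ above, where $\omega=\operatorname{add}\Lambda$. Apply the count to $(T')^{\perp_\Lambda}$ instead.
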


\begin{remark}\label{rem:proj_inj_summands}
If $T$ is a PWT module, any indecomposable projective-injective module $I$ (i.e. $\operatorname{Ext}^{>0}_\Lambda(I, X) = 0$ and $\operatorname{Ext}^{>0}_\Lambda(X, I) = 0$ for any module $X \in \operatorname{mod} \Lambda$) must appear as a direct summand of $T$.
\end{remark}

\subsection{One-Point Extensions}

Let $M \in \operatorname{mod} \Lambda$. The \emph{one-point extension} of $\Lambda$ by $M$ is defined as the matrix algebra:
\[
\Gamma = \Lambda[M] = \begin{pmatrix} \Lambda & M \\ 0 & k \end{pmatrix},
\]
with the ordinary matrix addition and the multiplication induced by the $\Lambda$-module structure of $M$.

The category $\operatorname{mod} \Gamma$ can be identified with the category of triples $(X, V, \phi)$, where $X \in \operatorname{mod} \Lambda$, $V$ is a finite-dimensional $k$-vector space, and $\phi \colon V \otimes_k M \to X$ is a $\Lambda$-homomorphism. We denote by $a$ the new extension vertex. Up to isomorphism, there is a unique indecomposable projective $\Gamma$-module $P_a$ that is not a $\Lambda$-module, and it corresponds to the triple $(M, k, \operatorname{id}_M)$. Let $S_a = (0, k, 0)$ be the corresponding simple $\Gamma$-module. There exists a canonical short exact sequence in $\operatorname{mod} \Gamma$:
\begin{equation} \label{eq:canonical_seq}
0 \longrightarrow M \longrightarrow P_a \longrightarrow S_a \longrightarrow 0,
\end{equation}
where $M$ is naturally viewed as a $\Gamma$-module concentrated at the subalgebra $\Lambda$. Since $a$ is a source vertex in the ordinary quiver of $\Gamma$, the simple module $S_a$ is an injective $\Gamma$-module.

To connect the homological properties between $\operatorname{mod} \Lambda$ and $\operatorname{mod} \Gamma$, we employ the restriction and extension functors.

\begin{definition}
	\begin{enumerate}[label=\text{\rm(\arabic*)}]
		\item The \emph{extension functor} $\mathcal{E} \colon \operatorname{mod} \Lambda \to \operatorname{mod} \Gamma$ is defined by $\mathcal{E}(X) = (X, 0, 0)$.
		\item The \emph{restriction functor} $\mathcal{R} \colon \operatorname{mod} \Gamma \to \operatorname{mod} \Lambda$ is defined by $\mathcal{R}(X, V, \phi) = X$.
	\end{enumerate}
\end{definition}

\begin{lemma}\label{lem:adjunction}
	Let $\Gamma = \Lambda[M]$ be the one-point extension of $\Lambda$ by $M \in \operatorname{mod} \Lambda$. The extension functor $\mathcal{E}$ and the restriction functor $\mathcal{R}$ form an adjoint pair $(\mathcal{E}, \mathcal{R})$.
\end{lemma}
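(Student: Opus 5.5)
We have to prove that $(\mathcal{E},\mathcal{R})$ is an adjoint pair. That is, for $X\in\operatorname{mod}\Lambda$ and $Y=(Y',V,\phi)\in\operatorname{mod}\Gamma$ we need a bijection
\[
\Hom_\Gamma(\mathcal{E}X, Y)\cong \Hom_\Lambda(X,\mathcal{R}Y),
\]
natural in both $X$ and $Y$. The plan is to compute everything directly in the triple description of $\operatorname{mod}\Gamma$.

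First I will recall what a morphism of triples is. A morphism $(X_1,V_1,\phi_1)\to(X_2,V_2,\phi_2)$ is a pair $(f,g)$, with $f\colon X_1\to X_2$ a $\Lambda$-homomorphism and $g\colon V_1\to V_2$ a $k$-linear map, such that
\[
f\circ\phi_1=\phi_2\circ(g\otimes M).
\]
Now take the source $\mathcal{E}X=(X,0,0)$. Then $V_1=0$ and $\phi_1=0$, so $g$ must be the zero map. Both sides of the compatibility condition are maps out of $0\otimes_k M=0$, so the condition holds automatically. Hence the assignment $(f,0)\mapsto f$ is a bijection
\[
\Hom_\Gamma((X,0,0),(Y',V,\phi))\longrightarrow\Hom_\Lambda(X,Y')=\Hom_\Lambda(X,\mathcal{R}Y).
\]
It is $k$-linear. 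This bijection is the candidate adjunction isomorphism.

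Next I will check naturality. Let $u\colon X_1\to X$ be a $\Lambda$-map and $(h,h')\colon Y\to Z$ a $\Gamma$-map. Precomposing with $\mathcal{E}u=(u,0)$ and postcomposing with $(h,h')$ sends $(f,0)$ to $(h f u,0)$. On the other side, $f$ is sent to $\mathcal{R}(h,h')\circ f\circ u=hfu$. So the two squares commute, and naturality holds.

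Equivalently, one can give the unit and counit explicitly. The unit is $\eta_X=\operatorname{id}\colon X\to\mathcal{R}\mathcal{E}X=X$. The counit is $\varepsilon_Y=(\operatorname{id}_{Y'},0)\colon(Y',0,0)\to(Y',V,\phi)$, which is a $\Gamma$-morphism by the same vacuous compatibility argument. The triangle identities are then immediate.

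I do not expect a real obstacle. The only point needing care is fixing the convention for the triple category (which direction $\phi$ goes, and that morphisms are pairs compatible with $\phi$). Once that is set, the morphism condition for a source with $V=0$ is trivially satisfied. As a consequence, $\mathcal{E}$ is fully faithful, since $\mathcal{R}\mathcal{E}=\operatorname{id}$ and the unit is the identity.
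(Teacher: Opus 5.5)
Your proposal is correct and takes the same approach as the paper: both identify $\operatorname{Hom}_\Gamma(\mathcal{E}X,Y)$ with $\operatorname{Hom}_\Lambda(X,\mathcal{R}Y)$ by noting that a morphism out of a triple with zero vector space is just a $\Lambda$-homomorphism, since the compatibility condition is vacuous. You also check naturality and give the unit and counit explicitly, which the paper's one-line proof leaves implicit.
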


\begin{proof}
	Note that
	\[
	\operatorname{Hom}_\Gamma(\mathcal{E}X, Y) \cong \operatorname{Hom}_\Lambda(X, Y_0) \cong \operatorname{Hom}_\Lambda(X, \mathcal{R}Y),
	\]
	where $Y=(Y_0, V_0, \phi_0)$,	we obtain the desired result.
\end{proof}

The following Lemma  is well known.

\begin{lemma}\label{lem:restriction_basics}
	Let $\Gamma = \Lambda[M]$ be the one-point extension of $\Lambda$ by $M \in \operatorname{mod} \Lambda$. Let $\mathcal{R}$ and $\mathcal{E}$ be the restriction and extension functors, respectively. The following hold:
	\begin{enumerate}[label=\text{\rm(\arabic*)}]
		\item For any $X \in \operatorname{mod} \Lambda$, $\mathcal{R}(\mathcal{E}X) = X$.
		\item $\mathcal{R}S_a = 0$.
		\item  $\mathcal{R}P_a \cong M$.
	\end{enumerate}
\end{lemma}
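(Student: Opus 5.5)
Each of the three assertions will be checked directly from the description of $\operatorname{mod}\Gamma$ as the category of triples $(X,V,\phi)$ with $\phi\colon V\otimes_k M\to X$. Here $\mathcal{E}X=(X,0,0)$ and $\mathcal{R}(X,V,\phi)=X$. On morphisms, $\mathcal{R}$ sends a pair $(f,g)$ to its $\Lambda$-component $f$, and $\mathcal{E}$ sends $f$ to $(f,0)$. Nothing beyond these definitions and the identifications already recalled for $P_a$ and $S_a$ will be needed.

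For (1), take $X\in\operatorname{mod}\Lambda$. By definition $\mathcal{E}X=(X,0,0)$, and $\mathcal{R}$ returns the first component of the triple, which is $X$. On a morphism $f$ we get $\mathcal{R}\mathcal{E}(f)=\mathcal{R}(f,0)=f$. Hence $\mathcal{R}\mathcal{E}$ is literally the identity functor, and not just isomorphic to it.

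For (2), recall that $S_a=(0,k,0)$, whose $\Lambda$-component is $0$, so $\mathcal{R}S_a=0$. For (3), recall that $P_a$ corresponds to the triple $(M,k,\mathrm{id}_M)$, using $k\otimes_k M\cong M$. Its $\Lambda$-part is $M$, so $\mathcal{R}P_a\cong M$.

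As a consistency check, I would apply the exact functor $\mathcal{R}$ to the canonical sequence \eqref{eq:canonical_seq}. This gives $0\to M\to \mathcal{R}P_a\to \mathcal{R}S_a\to 0$. Using (2), this is $0\to M\to \mathcal{R}P_a\to 0\to 0$, which recovers $\mathcal{R}P_a\cong M$ and agrees with (3).

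No step is a genuine obstacle. The only point requiring care is (3): one must use the correct triple for $P_a$, namely that $P_a=e_a\Gamma$ has $\Lambda$-part $e_a\Gamma e_\Lambda = M$ together with the identity structure map. This matches the identification stated just before the lemma.
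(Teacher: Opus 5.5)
Your verification is correct: it is the routine unwinding of the triple description $(X,V,\phi)$, which is all the paper intends, since it states the lemma as well known and gives no proof. One cosmetic caveat is that your aside $e_a\Gamma e_\Lambda = M$ matches the right-module convention $\Gamma=\begin{pmatrix}\Lambda&0\\ M&k\end{pmatrix}$ rather than the matrix literally written in the paper; nothing in your argument depends on it, because you use the stated identification $P_a=(M,k,\mathrm{id}_M)$.
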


\begin{lemma}\label{lem:ext_isomorphisms}
	Let $X, Y \in \operatorname{mod} \Lambda$. The following isomorphisms hold for all $i \ge 0$:
	\begin{enumerate}[label=\text{\rm(\arabic*)}]
		\item $\operatorname{Ext}^i_\Gamma(\mathcal{E}X, \mathcal{E}Y) \cong \operatorname{Ext}^i_\Lambda(X, Y)$.
		\item $\operatorname{Ext}^i_\Gamma(\mathcal{E}X, P_a) \cong \operatorname{Ext}^i_\Lambda(X, M)$.
		\item $\operatorname{Ext}^i_\Gamma(P_a, \mathcal{E}X) = 0$ for all $i \ge 1$.
	\end{enumerate}
\end{lemma}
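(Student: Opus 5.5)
The three isomorphisms all come from one observation about the extension functor: $\mathcal{E}$ is exact and sends projective $\Lambda$-modules to projective $\Gamma$-modules. Exactness is clear because $\mathcal{E}X=(X,0,0)$ is computed componentwise. For projectivity, let $e$ be the idempotent of $\Gamma$ corresponding to $\Lambda$. Then $e\Gamma = (\Lambda, M)$ as a row of the matrix algebra. For right modules, the indecomposable projectives $e_j\Gamma$ with $j$ a vertex of $\Lambda$ have zero component at $a$, because $a$ is a source and no nonzero paths end at $a$ from $\Lambda$-vertices in the right-module convention. Hence $e_j\Gamma = \mathcal{E}(e_j\Lambda)$. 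So a projective resolution $P_\bullet \to X$ in $\operatorname{mod}\Lambda$ is carried by $\mathcal{E}$ to a projective resolution $\mathcal{E}P_\bullet \to \mathcal{E}X$ in $\operatorname{mod}\Gamma$.

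\textbf{Proofs of (1) and (2).} Apply $\operatorname{Hom}_\Gamma(-,Y')$ to the resolution $\mathcal{E}P_\bullet$. By the adjunction of Lemma~\ref{lem:adjunction},
\[
\operatorname{Hom}_\Gamma(\mathcal{E}P_\bullet, Y') \cong \operatorname{Hom}_\Lambda(P_\bullet, \mathcal{R}Y'),
\]
naturally in $P_\bullet$. Taking cohomology gives
\[
\operatorname{Ext}^i_\Gamma(\mathcal{E}X, Y') \cong \operatorname{Ext}^i_\Lambda(X, \mathcal{R}Y')
\]
for every $\Gamma$-module $Y'$ and every $i \ge 0$. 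Specializing $Y'=\mathcal{E}Y$ and using $\mathcal{R}\mathcal{E}Y = Y$ from Lemma~\ref{lem:restriction_basics}(1) gives (1). Specializing $Y' = P_a$ and using $\mathcal{R}P_a \cong M$ from Lemma~\ref{lem:restriction_basics}(3) gives (2).

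\textbf{Proof of (3).} This is immediate because $P_a$ is a projective $\Gamma$-module, so $\operatorname{Ext}^i_\Gamma(P_a,-)=0$ for all $i\ge1$.

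\textbf{Main obstacle.} The only step needing care is checking that $\mathcal{E}$ preserves projectives, i.e.\ $\mathcal{E}(e_j\Lambda)\cong e_j\Gamma$. This depends on the side convention: the triple description $(X,V,\phi\colon V\otimes M\to X)$ with $P_a=(M,k,\mathrm{id})$ corresponds to $\Lambda$-projectives having zero $a$-component. I would verify it directly from the matrix form, $e_j\Gamma = (e_j\Lambda,\, e_jM)$ as a row. In the triple convention the $V$-slot of this row is zero, since $e_j$ kills the $k$-corner. This identifies $e_j\Gamma$ with $(e_j\Lambda,0,0)$. Once that identification is in place, the rest is formal homological algebra.
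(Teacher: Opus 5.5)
Your argument is correct, and it is the mirror image of the paper's. The paper resolves the \emph{second} variable. Since $\mathcal{E}$ is exact, its right adjoint $\mathcal{R}$ preserves injectives, so $\mathcal{R}$ turns an injective resolution $I^\bullet$ of $Z\in\operatorname{mod}\Gamma$ into one of $\mathcal{R}Z$. The adjunction $\operatorname{Hom}_\Gamma(\mathcal{E}X,I^\bullet)\cong\operatorname{Hom}_\Lambda(X,\mathcal{R}I^\bullet)$ then yields $\operatorname{Ext}^i_\Gamma(\mathcal{E}X,Z)\cong\operatorname{Ext}^i_\Lambda(X,\mathcal{R}Z)$. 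You resolve the \emph{first} variable projectively, so you need the dual fact that $\mathcal{E}$ preserves projectives. From the same general isomorphism you specialize exactly as the paper does, and (3) is the same one-line remark. Neither route is more general. Yours has the small by-product that $\mathcal{E}$ carries projective resolutions to projective resolutions, so $\operatorname{pd}_\Gamma\mathcal{E}X\le\operatorname{pd}_\Lambda X$ (with equality by (1)). The paper's gives $\operatorname{id}_\Lambda\mathcal{R}Z\le\operatorname{id}_\Gamma Z$.

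The step you flagged as the main obstacle should be justified differently, because your matrix check does not work as written. In the paper's displayed matrix $\begin{pmatrix}\Lambda & M\\ 0 & k\end{pmatrix}$, the row $e_j\Gamma$ has second entry $e_j\Gamma e_a=e_jM$. That entry \emph{is} the component of $e_j\Gamma$ at the vertex $a$, and it is generally nonzero; the fact that $e_j$ kills the $k$-corner says nothing about it. What your computation actually exposes is that this displayed matrix, read with right modules, is incompatible with the triple description $(X,V,\phi\colon V\otimes_k M\to X)$ and $P_a=(M,k,\mathrm{id}_M)$ that the paper really uses. The compatible form is $\begin{pmatrix}\Lambda & 0\\ M & k\end{pmatrix}$, where $e_j\Gamma=(e_j\Lambda,0)$ outright. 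Your path argument ($a$ is a source, so no paths run from vertices of $\Lambda$ to $a$) is fine in that convention. The cleanest proof, however, is convention-free and exactly dual to the paper's: for $P$ projective, $\operatorname{Hom}_\Gamma(\mathcal{E}P,-)\cong\operatorname{Hom}_\Lambda(P,\mathcal{R}(-))$ by Lemma~\ref{lem:adjunction}. This functor is exact because $\mathcal{R}$ is exact, so $\mathcal{E}P$ is projective.
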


\begin{proof}
	Recall that $(\mathcal{E}, \mathcal{R})$ is an adjoint pair of exact functors. Since $\mathcal{E}$ is exact, its right adjoint $\mathcal{R}$ preserves injective modules. Specifically, if $I^\bullet$ is an injective resolution of a $\Gamma$-module $Z$, then $\mathcal{R}I^\bullet$ is an injective resolution of $\mathcal{R}Z$ in $\operatorname{mod} \Lambda$.
	
	The adjunction $\operatorname{Hom}_\Gamma(\mathcal{E}X, -) \cong \operatorname{Hom}_\Lambda(X, \mathcal{R}(-))$ thus induces a natural isomorphism on the level of derived functors:
	\begin{equation} \label{eq1}
		\operatorname{Ext}^i_\Gamma(\mathcal{E}X, Z) \cong \operatorname{Ext}^i_\Lambda(X, \mathcal{R}Z), \quad \forall i \ge 0.
	\end{equation}
	
	(1)Take $Z = \mathcal{E}Y$. The isomorphism (\ref{eq1}) yields $\operatorname{Ext}^i_\Gamma(\mathcal{E}X, \mathcal{E}Y) \cong\operatorname{Ext}^i_\Lambda(X, \mathcal{RE}Y) \cong \operatorname{Ext}^i_\Lambda(X, Y)$.
	
	(2) Take $Z = P_a$.  We have  $\operatorname{Ext}^i_\Gamma(\mathcal{E}X, P_a) \cong \operatorname{Ext}^i_\Gamma(X,\mathcal{R} P_a) \cong\operatorname{Ext}^i_\Lambda(X, M)$ by Lemma \ref{lem:restriction_basics} (3) and the isomorphism (\ref{eq1}).
	
	Finally, (3) follows directly from the fact that $P_a$ is a projective object in $\operatorname{mod} \Gamma$.
\end{proof}

\section{Main results}\label{sec3}

In this section, we establish two methods for constructing projectively Wakamatsu tilting (PWT) modules over the one-point extension algebra $\Gamma = \Lambda[M]$ from a given PWT module over $\Lambda$.

	\begin{lemma}\label{lem:perp_category_proj}
	Let $U \in \operatorname{mod} \Lambda$ and $\widetilde{U} = \mathcal{E}U \oplus P_a$ in $\operatorname{mod} \Gamma$. For any module $Y \in \operatorname{mod} \Gamma$,
	\[
	Y \in \widetilde{U}^{\perp_\Gamma} \quad \text{if and only if} \quad \mathcal{R}Y \in U^{\perp_\Lambda}.
	\]
\end{lemma}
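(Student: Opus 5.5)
Since $\widetilde{U} = \mathcal{E}U \oplus P_a$, $\operatorname{Ext}$ is additive in the first variable, so
\[
\operatorname{Ext}^i_\Gamma(\widetilde{U}, Y) \cong \operatorname{Ext}^i_\Gamma(\mathcal{E}U, Y) \oplus \operatorname{Ext}^i_\Gamma(P_a, Y)
\]
for every $i \ge 1$. I will show that each summand can be rewritten purely in terms of $\Lambda$-modules.

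The second summand is the easy one. $P_a$ is a projective $\Gamma$-module, so $\operatorname{Ext}^i_\Gamma(P_a, Y) = 0$ for all $i \ge 1$ and every $Y \in \operatorname{mod}\Gamma$. This is the same reasoning as in Lemma~\ref{lem:ext_isomorphisms}(3), now for an arbitrary $Y$. Consequently $Y \in \widetilde{U}^{\perp_\Gamma}$ if and only if $\operatorname{Ext}^i_\Gamma(\mathcal{E}U, Y) = 0$ for all $i \ge 1$.

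For the first summand I use the natural isomorphism (\ref{eq1}) from the proof of Lemma~\ref{lem:ext_isomorphisms}:
\[
\operatorname{Ext}^i_\Gamma(\mathcal{E}X, Z) \cong \operatorname{Ext}^i_\Lambda(X, \mathcal{R}Z)
\]
for all $X \in \operatorname{mod}\Lambda$, $Z \in \operatorname{mod}\Gamma$ and $i \ge 0$. It comes from the adjunction of Lemma~\ref{lem:adjunction}, together with the facts that $\mathcal{R}$ is exact and preserves injectives. Taking $X = U$ and $Z = Y$ gives $\operatorname{Ext}^i_\Gamma(\mathcal{E}U, Y) \cong \operatorname{Ext}^i_\Lambda(U, \mathcal{R}Y)$ for all $i \ge 1$. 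Hence $\operatorname{Ext}^{>0}_\Gamma(\widetilde{U}, Y) = 0$ if and only if $\operatorname{Ext}^{>0}_\Lambda(U, \mathcal{R}Y) = 0$, that is, $\mathcal{R}Y \in U^{\perp_\Lambda}$. This proves both implications at once.

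The only step needing care is the justification of (\ref{eq1}), specifically that $\mathcal{R}$ sends injective $\Gamma$-modules to injective $\Lambda$-modules. This holds because $\mathcal{R}$ has an exact left adjoint $\mathcal{E}$: $\operatorname{Hom}_\Lambda(-, \mathcal{R}I) \cong \operatorname{Hom}_\Gamma(\mathcal{E}(-), I)$ is exact whenever $I$ is injective. As a result, applying $\mathcal{R}$ to an injective resolution of $Y$ gives an injective resolution of $\mathcal{R}Y$, and the adjunction identifies the two Hom complexes. The paper's earlier lemma already takes this for granted, so I will simply cite it. Note also that the argument never uses self-orthogonality of $U$; the statement holds for an arbitrary $U \in \operatorname{mod}\Lambda$, with $U^{\perp_\Lambda}$ read as $\{X \mid \operatorname{Ext}^{>0}_\Lambda(U, X) = 0\}$.
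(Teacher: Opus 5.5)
Your proof is correct and follows the paper's own argument: split $\operatorname{Ext}^i_\Gamma(\widetilde{U},Y)$ by additivity, kill the $P_a$-summand by projectivity, and identify $\operatorname{Ext}^i_\Gamma(\mathcal{E}U,Y)$ with $\operatorname{Ext}^i_\Lambda(U,\mathcal{R}Y)$ via the adjunction isomorphism (\ref{eq1}). Two points in your version are more precise than the paper's: you cite (\ref{eq1}) directly rather than the numbered items of Lemma~\ref{lem:ext_isomorphisms} (which only treat $Z=\mathcal{E}Y$ and $Z=P_a$), and you justify why $\mathcal{R}$ preserves injectives.
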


\begin{proof}
	Note that $P_a$ is a projective module in $\operatorname{mod} \Gamma$,
	we obtain
	\[
	\operatorname{Ext}^i_\Gamma(\widetilde{U}, Y) \cong \operatorname{Ext}^i_\Gamma(\mathcal{E}U, Y) \oplus \operatorname{Ext}^i_\Gamma(P_a, Y)\cong\operatorname{Ext}^i_\Gamma(\mathcal{E}U, Y) \cong \operatorname{Ext}^i_\Lambda(U, \mathcal{R}Y), \quad \text{for all } i > 0.
	\]
	The third isomorphism follows from Lemma \ref{lem:ext_isomorphisms}. Hence
	$\operatorname{Ext}^i_\Gamma(\widetilde{U}, Y) = 0$ for all $i > 0$ if and only if $\operatorname{Ext}^i_\Lambda(U, \mathcal{R}Y) = 0$ for all $i > 0$. This establishes the required equivalence.
\end{proof}

	Let $\mathcal{C}$ be a subcategory of $\operatorname{mod}\Lambda$. A morphism $f \colon X \to E$ with $E \in \mathcal{C}$ is called a \textit{left $\mathcal{C}$-approximation} of $X$ if any morphism from $X$ to an object in $\mathcal{C}$ factors through $f$. $f$ is called minimal if every $h\in\End E$ such that $hf=f$ is an automorphism.  Dually, we have the concept of right $\mathcal{C}$-approximation.
	
We first show that a PWT module over $\Lambda$ can be lifted to $\Gamma$ by adding the new projective module $P_a$.

\begin{theorem}\label{thm:main_construction_proj}
	Let $U$ be a PWT module in $\operatorname{mod} \Lambda$. If $M\in U^{\perp_\Lambda}$, then
	\[
	\widetilde{U} = \mathcal{E}U \oplus P_a
	\]
	is PWT module in $\operatorname{mod} \Gamma$.
\end{theorem}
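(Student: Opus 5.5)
The proof has three steps: self-orthogonality of $\widetilde{U}$, Ext-projectivity in $\widetilde{U}^{\perp_\Gamma}$, and the progenerator condition. Throughout we use Lemma \ref{lem:perp_category_proj}, which identifies $\widetilde{U}^{\perp_\Gamma}$ with the $\Gamma$-modules $Y$ satisfying $\mathcal{R}Y\in U^{\perp_\Lambda}$.

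\emph{Self-orthogonality.} We check each piece of $\operatorname{Ext}^i_\Gamma(\widetilde U,\widetilde U)$ for $i>0$. The terms $\operatorname{Ext}^i_\Gamma(P_a,-)$ vanish because $P_a$ is projective. Next, $\operatorname{Ext}^i_\Gamma(\mathcal{E}U,\mathcal{E}U)\cong \operatorname{Ext}^i_\Lambda(U,U)=0$ by Lemma \ref{lem:ext_isomorphisms}(1). Finally, $\operatorname{Ext}^i_\Gamma(\mathcal{E}U,P_a)\cong\operatorname{Ext}^i_\Lambda(U,M)=0$ by Lemma \ref{lem:ext_isomorphisms}(2) and the hypothesis $M\in U^{\perp_\Lambda}$. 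Equivalently, $\mathcal{R}\widetilde U=U\oplus M\in U^{\perp_\Lambda}$, so $\widetilde U\in\widetilde U^{\perp_\Gamma}$ by Lemma \ref{lem:perp_category_proj}.

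\emph{Ext-projectivity.} This is automatic: $\operatorname{Ext}^1_\Gamma(\widetilde U,Y)=0$ for every $Y\in\widetilde U^{\perp_\Gamma}$ by the definition of the perpendicular category.

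\emph{Progenerator condition.} Let $Y=(Y_0,V,\phi)\in\widetilde U^{\perp_\Gamma}$, so $Y_0\in U^{\perp_\Lambda}$. Since $U$ is PWT, there is a short exact sequence $0\to K\to U_0\xrightarrow{p} Y_0\to 0$ in $\operatorname{mod}\Lambda$ with $U_0\in\operatorname{add}U$ and $K\in U^{\perp_\Lambda}$. Next take the morphism $q\colon P_a\otimes_k V = P_a^{\dim V}\to Y$ corresponding to $V$; its restriction to $\Lambda$ is $\phi\colon V\otimes_k M\to Y_0$. By adjunction (Lemma \ref{lem:adjunction}), $p$ corresponds to a morphism $\mathcal{E}U_0\to Y$. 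Combining the two gives
\[
f=(\tilde p,q)\colon \mathcal{E}U_0\oplus P_a^{\dim V}\to Y .
\]
This map is surjective: on the vertex $a$ it is the identity of $V$, and after restriction it is $(p,\phi)$, which is surjective because $p$ is. Set $L=\ker f$. Applying the exact functor $\mathcal{R}$ gives $0\to\mathcal{R}L\to U_0\oplus(V\otimes M)\xrightarrow{(p,\phi)}Y_0\to0$.

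The key point, and the one step needing care, is to show $\mathcal{R}L\in U^{\perp_\Lambda}$. The restricted sequence cannot be used directly, since its middle and right terms lie in $U^{\perp_\Lambda}$ but $U^{\perp_\Lambda}$ need not be closed under kernels of epimorphisms. Instead, use that $p$ is already surjective and form the pullback of $p$ along $\phi$. This yields a short exact sequence
\[
0\to K\to \mathcal{R}L\to V\otimes_k M\to 0,
\]
where the map $\mathcal{R}L\to V\otimes_k M$ is the projection of $\ker(p,\phi)$ to the second summand. It is surjective because every $\phi(m)$ lifts through $p$, and its kernel is $\ker p=K$. Both ends lie in $U^{\perp_\Lambda}$: $K$ by construction, and $V\otimes_k M$ by hypothesis. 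Since $U^{\perp_\Lambda}$ is closed under extensions, $\mathcal{R}L\in U^{\perp_\Lambda}$.

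Hence $L\in\widetilde U^{\perp_\Gamma}$ by Lemma \ref{lem:perp_category_proj}. The sequence $0\to L\to \mathcal{E}U_0\oplus P_a^{\dim V}\to Y\to0$ has middle term in $\operatorname{add}\widetilde U$ and kernel in $\widetilde U^{\perp_\Gamma}$, which is exactly the required progenerator sequence. So $\widetilde U$ is PWT.
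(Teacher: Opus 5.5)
Your proof is correct, and its architecture matches the paper's. Both use the same self-orthogonality check and the same map $\mathcal{E}U_0\oplus P_a^{\oplus d}\to Y$ with $d=\dim_k V$. Both then reduce, via Lemma \ref{lem:perp_category_proj}, to showing $\mathcal{R}L\in U^{\perp_\Lambda}$.

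The only real difference is how you do that key step.

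\emph{The paper's route.} It applies $\operatorname{Hom}_\Lambda(U,-)$ to the restricted sequence $0\to\mathcal{R}L\to U_0\oplus M^{\oplus d}\to\mathcal{R}Y\to 0$. It uses that the $U_0$-component is a right $\operatorname{add}U$-approximation to get surjectivity on $\operatorname{Hom}_\Lambda(U,-)$, which kills $\operatorname{Ext}^1_\Lambda(U,\mathcal{R}L)$. Higher degrees are then handled by dimension shifting.

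\emph{Your route.} You form a pullback to get $0\to K\to\mathcal{R}L\to M^{\oplus d}\to 0$, and then use that $U^{\perp_\Lambda}$ is closed under extensions. This neatly sidesteps the fact, which you correctly flag, that $U^{\perp_\Lambda}$ is not closed under kernels of epimorphisms.

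The two arguments have the same content. For your surjection $p$ with kernel $K$, self-orthogonality of $U$ makes ``$p$ is a right $\operatorname{add}U$-approximation'' equivalent to $\operatorname{Ext}^1_\Lambda(U,K)=0$.

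Your version is somewhat more careful in two places:
\begin{itemize}
\item You make explicit where $K\in U^{\perp_\Lambda}$ is used. The paper only chooses $f$ as a surjection and later asserts the approximation property without justification.
\item You construct the component $\mathcal{E}U_0\to Y$ properly through the adjunction. The paper writes $\mathcal{E}f$ with target $\mathcal{E}\mathcal{R}Y$ and leaves the composition with the counit implicit.
\end{itemize}
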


\begin{proof}
	Using the additivity of the $\operatorname{Ext}$ functor, we have
	\[
	\operatorname{Ext}^i_\Gamma(\widetilde{U}, \widetilde{U}) \cong \operatorname{Ext}^i_\Gamma(\mathcal{E}U, \mathcal{E}U) \oplus \operatorname{Ext}^i_\Gamma(\mathcal{E}U, P_a) \oplus \operatorname{Ext}^i_\Gamma(P_a, \widetilde{U}).
	\]
	Since $P_a$ is a projective $\Gamma$-module, the $\operatorname{Ext}^i_\Gamma(P_a, \widetilde{U})$ vanishes for all $i > 0$.  $\operatorname{Ext}^i_\Gamma(\mathcal{E}U, \mathcal{E}U) \cong \operatorname{Ext}^i_\Lambda(U, U) = 0$ follows from Lemma \ref{lem:ext_isomorphisms}(1) and  $U$ is self-orthogonal in $\operatorname{mod} \Lambda$.
	By Lemma \ref{lem:ext_isomorphisms}(2), $\operatorname{Ext}^i_\Gamma(\mathcal{E}U, P_a) \cong \operatorname{Ext}^i_\Lambda(U, M)$ vanishes for all $i > 0$ by the hypothesis that $M \in U^{\perp_\Lambda}$.
	Consequently, $\operatorname{Ext}^{>0}_\Gamma(\widetilde{U}, \widetilde{U}) = 0$.
	
	Now, let $Y \in \widetilde{U}^{\perp_\Gamma}$.  Let $V_a$ be the vector space of $Y$ at the extension vertex $a$, and let $d = \dim_k V_a$. There is a canonical map $g \colon P_a^{\oplus d} \to Y$ covering the top of $Y$ at $a$.
	By Lemma \ref{lem:perp_category_proj}, the restriction $\mathcal{R}Y$ belongs to $U^{\perp_\Lambda}$. Since $U$ is an Ext-progenerator of $U^{\perp_\Lambda}$, there exists a surjection $f \colon U_0 \to \mathcal{R}Y$ with $U_0 \in \operatorname{add} U$.
	Applying the exact extension functor $\mathcal{E}$, we obtain a morphism $\mathcal{E}f \colon \mathcal{E}U_0 \to \mathcal{E}\mathcal{R}Y$. Define $h = (\mathcal{E}f, g) \colon \mathcal{E}U_0 \oplus P_a^{\oplus d} \to Y$, we have $h$ is surjective. Let $K = \operatorname{Ker} h$. This yields a short exact sequence in $\operatorname{mod} \Gamma$:
	\[
	0 \longrightarrow K \longrightarrow \widetilde{U}_0 \xrightarrow{h} Y \longrightarrow 0,
	\]
	where $\widetilde{U}_0 = \mathcal{E}U_0 \oplus P_a^{\oplus d} \in \operatorname{add} \widetilde{U}$. It remains to show $K \in \widetilde{U}^{\perp_\Gamma}$.
	Applying the exact restriction functor $\mathcal{R}$ and recalling that $\mathcal{R}P_a \cong M$, we obtain:
	\[
	0 \longrightarrow \mathcal{R}K \longrightarrow U_0 \oplus M^{\oplus d} \longrightarrow \mathcal{R}Y \longrightarrow 0.
	\]
 Applying the functor $\operatorname{Hom}_{\Lambda}(U, -)$ to this sequence yields the long exact sequence:
	\[
	\cdots \rightarrow \operatorname{Hom}_{\Lambda}(U, U_{0} \oplus M^{\oplus d}) \xrightarrow{(\mathcal{R}h)_{*}} \operatorname{Hom}_{\Lambda}(U, \mathcal{R}Y) \rightarrow \operatorname{Ext}_{\Lambda}^{1}(U, \mathcal{R}K) \rightarrow \operatorname{Ext}_{\Lambda}^{1}(U, U_{0} \oplus M^{\oplus d}) \rightarrow \cdots
	\]
	Notice that the restriction of $\mathcal{R}h$ to the $U_{0}$ summand is exactly $f$. Because $f$ is a right $\operatorname{add} U$-approximation, the induced map $f_{*}$ is surjective, which guarantees the surjectivity of $(\mathcal{R}h)_{*}$. Moreover, since both $U_{0}$ and $M$ belong to $U^{\perp_{\Lambda}}$, the middle term $U_{0} \oplus M^{\oplus d}$ is in $U^{\perp_{\Lambda}}$, meaning $\operatorname{Ext}_{\Lambda}^{1}(U, U_{0} \oplus M^{\oplus d}) = 0$. The exactness of the sequence thus forces $\operatorname{Ext}_{\Lambda}^{1}(U, \mathcal{R}K) = 0$.	For $i > 0$, dimension shifting along the exact sequence gives $\operatorname{Ext}_{\Lambda}^{i+1}(U, \mathcal{R}K) \cong \operatorname{Ext}_{\Lambda}^{i}(U, \mathcal{R}Y) = 0$, since $\mathcal{R}Y \in U^{\perp_{\Lambda}}$. Therefore, $\operatorname{Ext}_{\Lambda}^{>0}(U, \mathcal{R}K) = 0$, establishing that $\mathcal{R}K \in U^{\perp_{\Lambda}}$.
	By Lemma \ref{lem:perp_category_proj} again, this implies $K \in 	\widetilde{U}^{\perp_{\Gamma}}$. This completes the proof.
\end{proof}

\begin{definition} \label{def:mutation}
	Let $U=\overline{U}\oplus X$ be a basic PWT module in $\mathrm{mod}\,\Lambda$, where $X$ is an indecomposable direct summand. A basic PWT module $U^{\prime}$ is called a left mutation of $U$ with respect to $X$ if $U^{\prime}=\overline{U}\oplus Y$ for some indecomposable module $Y\not\cong X$, and there exists a short exact sequence in $\mathrm{mod}\,\Lambda$:
	$$0\longrightarrow X\xrightarrow{f} E\longrightarrow Y\longrightarrow 0$$
	where $f$ is a minimal left $\mathrm{add}\,\overline{U}$-approximation of $X$.
\end{definition}

Next, we will prove that this lifting process in Theorem \ref{thm:main_construction_proj} perfectly preserves  mutation relations under certain homological conditions. 

\begin{lemma}\label{lem:approx_lifting}	Let $\Gamma = \Lambda[M]$ be the one-point extension of $\Lambda$ by $M \in \operatorname{mod} \Lambda$ and  $U \in \mathrm{mod}\,\Lambda$. Consider a short exact sequence in $\mathrm{mod}\,\Lambda$:
	$$ 0 \longrightarrow X \xrightarrow{f} E \longrightarrow Y \longrightarrow 0 $$
	where $f$ is a left $\mathrm{add}\,U$-approximation of $X$. Then the induced morphism $\mathcal{E}f \colon \mathcal{E}X \rightarrow \mathcal{E}E$ is a left $\mathrm{add}(\mathcal{E}U \oplus P_{a})$-approximation of $\mathcal{E}X$ in $\mathrm{mod}\,\Gamma$ if and only if the induced map $\mathrm{Hom}_{\Lambda}(E,M) \rightarrow \mathrm{Hom}_{\Lambda}(X,M)$ is surjective. In particular, this holds if $\mathrm{Ext}_{\Lambda}^{1}(Y,M) = 0$. Moreover, $f$ is minimal if and only if $\mathcal{E}f$ is also.
\end{lemma}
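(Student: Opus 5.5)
The plan is to verify the approximation property separately on the indecomposable summands of $\operatorname{add}(\mathcal{E}U\oplus P_a)$. Up to isomorphism these are the modules $\mathcal{E}U'$ with $U'$ an indecomposable summand of $U$, together with $P_a$. A morphism $\mathcal{E}X\to \mathcal{E}U_1\oplus P_a^{\oplus m}$ factors through $\mathcal{E}f$ if and only if each component does. So $\mathcal{E}f$ is a left $\operatorname{add}(\mathcal{E}U\oplus P_a)$-approximation exactly when two maps are surjective: $\operatorname{Hom}_\Gamma(\mathcal{E}E,\mathcal{E}U_1)\to\operatorname{Hom}_\Gamma(\mathcal{E}X,\mathcal{E}U_1)$ for $U_1\in\operatorname{add}U$, and $\operatorname{Hom}_\Gamma(\mathcal{E}E,P_a)\to\operatorname{Hom}_\Gamma(\mathcal{E}X,P_a)$.

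For the first map, Lemma \ref{lem:ext_isomorphisms}(1) with $i=0$ (equivalently, full faithfulness of $\mathcal{E}$) identifies it naturally with $\operatorname{Hom}_\Lambda(E,U_1)\to\operatorname{Hom}_\Lambda(X,U_1)$. This map is surjective because $f$ is a left $\operatorname{add}U$-approximation. For the second map, the adjunction in Lemma \ref{lem:adjunction} together with $\mathcal{R}P_a\cong M$ (Lemma \ref{lem:restriction_basics}(3)) gives a natural isomorphism $\operatorname{Hom}_\Gamma(\mathcal{E}Z,P_a)\cong\operatorname{Hom}_\Lambda(Z,M)$. I will check naturality in $Z$ so that the square with $f^*$ commutes. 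Then the second map is surjective if and only if $\operatorname{Hom}_\Lambda(E,M)\to\operatorname{Hom}_\Lambda(X,M)$ is surjective, and this proves the equivalence. For the ``in particular'' clause, apply $\operatorname{Hom}_\Lambda(-,M)$ to the short exact sequence. This gives the exact sequence $\operatorname{Hom}_\Lambda(E,M)\to\operatorname{Hom}_\Lambda(X,M)\to\operatorname{Ext}^1_\Lambda(Y,M)$, so $\operatorname{Ext}^1_\Lambda(Y,M)=0$ forces surjectivity.

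For minimality, left minimality of $\mathcal{E}f$ refers to endomorphisms of $\mathcal{E}E$, and $\mathcal{E}$ is fully faithful by Lemma \ref{lem:ext_isomorphisms}(1) with $i=0$. Hence every $h'\in\End_\Gamma(\mathcal{E}E)$ is $\mathcal{E}h$ for a unique $h\in\End_\Lambda(E)$. Moreover $h'\circ\mathcal{E}f=\mathcal{E}f$ if and only if $hf=f$, and $\mathcal{E}h$ is an automorphism if and only if $h$ is. Both use faithfulness, and for the inverse we use that $\mathcal{E}$ preserves identities and compositions. So the minimality conditions correspond exactly.

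The only step needing real care is the naturality of the adjunction isomorphism with respect to $f$, which is what allows the surjectivity statements to be transported. I would verify it directly from the triple description: a morphism $\mathcal{E}Z=(Z,0,0)\to P_a=(M,k,\mathrm{id})$ is just a $\Lambda$-map $Z\to M$ with zero component at $a$, and precomposition with $\mathcal{E}f$ is precomposition with $f$. The remaining steps are routine.
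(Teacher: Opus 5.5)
Your proposal is correct and follows essentially the same route as the paper: full faithfulness of $\mathcal{E}$ together with the approximation property of $f$ disposes of the components into $\operatorname{add}\mathcal{E}U$, the adjunction identifies $\operatorname{Hom}_\Gamma(\mathcal{E}(-),P_a)$ with $\operatorname{Hom}_\Lambda(-,M)$, the long exact $\operatorname{Hom}_\Lambda(-,M)$-sequence gives the $\operatorname{Ext}^1$ clause, and minimality transfers through $\operatorname{End}_\Gamma(\mathcal{E}E)\cong\operatorname{End}_\Lambda(E)$. Your explicit naturality check and your proof of both directions of the minimality equivalence (the paper only argues that $f$ minimal implies $\mathcal{E}f$ minimal) are small refinements of the same argument.
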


\begin{proof}Take  $\widetilde{\mathcal{C}} = \mathrm{add}(\mathcal{E}U \oplus P_{a})$,
	$\mathcal{E}f$ is a left $\widetilde{\mathcal{C}}$-approximation if and only if any morphism $h \colon \mathcal{E}X \rightarrow \widetilde{C}$ with $\widetilde{C} \in \widetilde{\mathcal{C}}$ factors through $\mathcal{E}f$. Any object in $\widetilde{\mathcal{C}}$ is of the form $\mathcal{E}U_{0} \oplus P_{a}^{\oplus k}$ for some $U_{0} \in \mathcal{C}$ and $k \ge 0$.
	Since $f$ is a left $\mathcal{C}$-approximation in $\mathrm{mod}\,\Lambda$ and the exact extension functor $\mathcal{E}$ is fully faithful, any morphism $\mathcal{E}X \rightarrow \mathcal{E}U_{0}$ factors through $\mathcal{E}f$. Therefore, we only need to consider factorizations of morphisms targeting the projective summand $P_{a}$.
	By the standard adjunction isomorphism, $\mathrm{Hom}_{\Gamma}(\mathcal{E}X, P_{a}) \cong \mathrm{Hom}_{\Lambda}(X, M)$ and $\mathrm{Hom}_{\Gamma}(\mathcal{E}E, P_{a}) \cong \mathrm{Hom}_{\Lambda}(E, M)$. Thus, the factorization property in $\mathrm{mod}\,\Gamma$ is precisely equivalent to the surjectivity of the naturally induced map $\mathrm{Hom}_{\Lambda}(E,M) \rightarrow \mathrm{Hom}_{\Lambda}(X,M)$.
	Applying the covariant functor $\mathrm{Hom}_{\Lambda}(-, M)$ to the short exact sequence in $\mathrm{mod}\,\Lambda$, we obtain the long exact sequence:
	$$ \cdots \longrightarrow \mathrm{Hom}_{\Lambda}(E, M) \longrightarrow \mathrm{Hom}_{\Lambda}(X, M) \longrightarrow \mathrm{Ext}_{\Lambda}^{1}(Y, M) \longrightarrow \mathrm{Ext}_{\Lambda}^{1}(E, M) \longrightarrow \cdots $$
	The exactness of the sequence forces the map $\mathrm{Hom}_{\Lambda}(E, M) \rightarrow \mathrm{Hom}_{\Lambda}(X, M)$ to be surjective if $\mathrm{Ext}_{\Lambda}^{1}(Y, M) = 0$. 
The preservation of the minimal left approximation strictly follows from the fact that the exact extension functor $\mathcal{E}$ is fully faithful, which guarantees that any endomorphism $g \in \operatorname{End}_\Gamma(\mathcal{E}E)$ satisfying $g \circ \mathcal{E}f = \mathcal{E}f$ is of the form $\mathcal{E}h$ for some automorphism $h \in \operatorname{End}_\Lambda(E)$, making $g$ an automorphism as well, thus completing the proof.
\end{proof}

\begin{theorem} \label{thm:mutation_preservation}
	Let $\Gamma = \Lambda[M]$ be the one-point extension of $\Lambda$ by $M \in \operatorname{mod} \Lambda$. 
	Suppose $U$  is a PWT module over $\Lambda$ and $U^{\prime}$ is a left mutation of $U$. If $M\in U^{\perp_{\Lambda}}\cap (U^{\prime})^{\perp_{\Lambda}}$, then
	$\widetilde{U}^{\prime}=\mathcal{E}U^{\prime}\oplus P_{a}$ is a left mutation of $\widetilde{U}=\mathcal{E}U\oplus P_{a}$ in $\mathrm{mod}\,\Gamma$.
\end{theorem}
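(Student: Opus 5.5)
We check the three requirements of Definition~\ref{def:mutation} for the pair $\widetilde{U},\widetilde{U}'$ in turn.

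\textbf{Setup.} Write $U=\overline{U}\oplus X$ and $U'=\overline{U}\oplus Y$, with a short exact sequence
\[
0\to X\xrightarrow{f}E\to Y\to 0,
\]
where $f$ is a minimal left $\mathrm{add}\,\overline{U}$-approximation. Put $\overline{\widetilde{U}}=\mathcal{E}\overline{U}\oplus P_a$. Then $\widetilde{U}=\overline{\widetilde{U}}\oplus\mathcal{E}X$ and $\widetilde{U}'=\overline{\widetilde{U}}\oplus\mathcal{E}Y$.

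\textbf{Step 1: both lifts are basic PWT modules.} Since $M\in U^{\perp_\Lambda}$ and $M\in (U')^{\perp_\Lambda}$, Theorem~\ref{thm:main_construction_proj} applies to both $U$ and $U'$. Hence $\widetilde{U}$ and $\widetilde{U}'$ are PWT modules over $\Gamma$.

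They are basic for the following reasons. The functor $\mathcal{E}$ is fully faithful, so it preserves indecomposability and reflects isomorphisms. Moreover, $P_a$ is not isomorphic to any $\mathcal{E}Z$, because its vector space at $a$ is nonzero. For the same two reasons, $\mathcal{E}X$ and $\mathcal{E}Y$ are indecomposable and $\mathcal{E}Y\not\cong\mathcal{E}X$.

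\textbf{Step 2: the lifted sequence.} The functor $\mathcal{E}$ is exact, so
\[
0\to\mathcal{E}X\xrightarrow{\mathcal{E}f}\mathcal{E}E\to\mathcal{E}Y\to 0
\]
is exact in $\mathrm{mod}\,\Gamma$, and $\mathcal{E}E\in\mathrm{add}(\mathcal{E}\overline{U})\subseteq\mathrm{add}\,\overline{\widetilde{U}}$.

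We now apply Lemma~\ref{lem:approx_lifting} to $\overline{U}$ in place of $U$. It shows that $\mathcal{E}f$ is a left $\mathrm{add}(\mathcal{E}\overline{U}\oplus P_a)$-approximation as soon as $\mathrm{Ext}^1_\Lambda(Y,M)=0$. The same lemma shows that $\mathcal{E}f$ is minimal because $f$ is.

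\textbf{Step 3 (main obstacle): $\mathrm{Ext}^1_\Lambda(Y,M)=0$.} The hypothesis gives $\mathrm{Ext}^{>0}_\Lambda(U',M)=0$, and $Y$ is a direct summand of $U'$. Hence $\mathrm{Ext}^1_\Lambda(Y,M)=0$ follows at once. This is exactly the point where the hypothesis $M\in(U')^{\perp_\Lambda}$ is used.

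If one tries instead to use only $M\in U^{\perp_\Lambda}$, one needs the surjectivity of $\mathrm{Hom}_\Lambda(E,M)\to\mathrm{Hom}_\Lambda(X,M)$ directly. That is not available, because $\mathrm{Hom}_\Lambda(-,M)$ need not behave well on the approximation sequence. So this step is the only genuinely homological input.

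\textbf{Conclusion.} Steps 1--3 together show that $\widetilde{U}'$ is obtained from $\widetilde{U}$ by replacing $\mathcal{E}X$ with $\mathcal{E}Y$ via a minimal left $\mathrm{add}\,\overline{\widetilde{U}}$-approximation sequence. By Definition~\ref{def:mutation}, $\widetilde{U}'$ is a left mutation of $\widetilde{U}$.
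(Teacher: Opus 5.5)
Your proof is correct and follows the paper's argument essentially step for step: both lifts are PWT by Theorem~\ref{thm:main_construction_proj}, the mutation sequence is pushed through the exact functor $\mathcal{E}$, and Lemma~\ref{lem:approx_lifting} (applied to $\overline{U}$, using $\mathrm{Ext}^1_\Lambda(Y,M)=0$ from $M\in(U')^{\perp_\Lambda}$) gives the minimal left $\mathrm{add}(\mathcal{E}\overline{U}\oplus P_a)$-approximation. Your explicit check that $\widetilde{U}$ and $\widetilde{U}'$ are basic, with $\mathcal{E}X$ and $\mathcal{E}Y$ indecomposable and non-isomorphic, fills in a detail the paper leaves implicit.
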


\begin{proof}
	Let $U=\overline{U}\oplus X$ be a PWT module over $\Lambda$ and  $U^{\prime}=\overline{U}\oplus Y$  a left mutation of $U$. Then
	$$\widetilde{U}=\mathcal{E}\overline{U}\oplus \mathcal{E}X\oplus P_{a} \quad \text{and} \quad \widetilde{U}^{\prime}=\mathcal{E}\overline{U}\oplus \mathcal{E}Y\oplus P_{a}.$$
Theorem \ref{thm:main_construction_proj} follows $\widetilde{U}$ and $\widetilde{U}^{\prime}$ are PWT module in $\mathrm{mod}\,\Gamma$.
	Consequently, the common direct summand of them is precisely $\widetilde{U}_{\mathrm{com}}=\mathcal{E}\overline{U}\oplus P_{a}$. 
	Next, We will show that $\widetilde{U}^{\prime}$ is obtained from $\widetilde{U}$ by mutating the indecomposable summand $\mathcal{E}X$. 
	Applying the exact functor $\mathcal{E}$ to the mutation sequence in $\mathrm{mod}\,\Lambda$, we obtain the following exact sequence in $\mathrm{mod}\,\Gamma$:
	$$0\longrightarrow \mathcal{E}X\xrightarrow{\mathcal{E}f} \mathcal{E}E\longrightarrow \mathcal{E}Y\longrightarrow 0.$$
	Since $E\in \mathrm{add}\,\overline{U}$, it follows that $\mathcal{E}E\in \mathrm{add}(\mathcal{E}\overline{U})\subseteq \mathrm{add}\,\widetilde{U}_{\mathrm{com}}$.
	By our hypothesis, $M\in (U^{\prime})^{\perp_{\Lambda}}$ which implies $\mathrm{Ext}_{\Lambda}^{1}(Y,M)=0$.
	By Lemma \ref{lem:approx_lifting}, the morphism $\mathcal{E}f$ is a minimal left $\mathrm{add}\,\widetilde{U}_{\mathrm{com}}$-approximation of $\mathcal{E}X$.  We conclude that $\widetilde{U}^{\prime}$ is precisely the left mutation of $\widetilde{U}$ with respect to $\mathcal{E}X$.
\end{proof}

Inspired by \cite[Theorem 2.1]{Gao22}, we  focus on a specific class of one-point extensions, establishing a bijection for PWT modules between $\mathrm{mod}\,\Lambda$ and $\mathrm{mod}\,\Gamma$.

\begin{definition} \label{def:source_point}
	Let $\Lambda = kQ/I$ be a finite-dimensional basic algebra. A vertex $i \in Q_0$ is called a source point if there are no arrows in $Q_1$ ending at $i$. It is a classical fact that $i$ is a source point if and only if the corresponding simple module $S_i$ is an injective $\Lambda$-module.
	A \emph{source point extension} of $\Lambda$ is defined as the one-point extension algebra $\Gamma = \Lambda[S_i]$.
\end{definition}

In a source point extension $\Gamma = \Lambda[S_i]$, the new indecomposable projective $\Gamma$-module $P_a$ is explicitly a projective-injective module. Moreover, all indecomposable 
$\Gamma$-modules exactly consist of  $P_a$, $S_a$ and the  image of all indecomposable 
$\Lambda$-modules under  the extension functor  $\mathcal{E}$,.

\begin{lemma}\label{lem:self-ortho}
Let $\Gamma=\Lambda[S_i]$ be a source point extension and $U' \in \operatorname{mod} \Lambda$  a self-orthogonal module. If $\Hom_\Lambda(S_i, U') = 0$ and  $\operatorname{Ext}_\Lambda^{>0}(S_i, U') = 0$,
then the $\Gamma$-module $\widetilde{U} = \mathcal{E}U' \oplus P_a \oplus S_a$ is  self-orthogonal in $\operatorname{mod} \Gamma$.
\end{lemma}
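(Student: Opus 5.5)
We need $\operatorname{Ext}^{>0}_\Gamma(\widetilde U,\widetilde U)=0$, and we split it over the nine pairs of summands drawn from $\mathcal{E}U'$, $P_a$ and $S_a$. Two facts about a source point extension do most of the work. First, $P_a$ is projective, so every $\operatorname{Ext}^{>0}_\Gamma(P_a,-)$ vanishes. Second, $S_a$ and $P_a$ are injective $\Gamma$-modules: $S_a$ because $a$ is a source, and $P_a$ because it is projective-injective. Hence every $\operatorname{Ext}^{>0}_\Gamma(-,S_a)$ and every $\operatorname{Ext}^{>0}_\Gamma(-,P_a)$ vanishes. What remains are $\operatorname{Ext}^{>0}_\Gamma(\mathcal{E}U',\mathcal{E}U')$ and $\operatorname{Ext}^{>0}_\Gamma(S_a,\mathcal{E}U')$. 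The first is $\operatorname{Ext}^{>0}_\Lambda(U',U')=0$ by Lemma \ref{lem:ext_isomorphisms}(1) and the self-orthogonality of $U'$.

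The key step is the term $\operatorname{Ext}^i_\Gamma(S_a,\mathcal{E}U')$. Here $M=S_i$, so the canonical sequence (\ref{eq:canonical_seq}) reads
\[
0\longrightarrow \mathcal{E}S_i\longrightarrow P_a\longrightarrow S_a\longrightarrow 0 .
\]
Applying $\operatorname{Hom}_\Gamma(-,\mathcal{E}U')$ gives a long exact sequence. Since $P_a$ is projective, $\operatorname{Ext}^{j}_\Gamma(P_a,\mathcal{E}U')=0$ for $j\ge1$, and Lemma \ref{lem:ext_isomorphisms}(3) confirms this. Dimension shifting then gives
\[
\operatorname{Ext}^{i+1}_\Gamma(S_a,\mathcal{E}U')\cong \operatorname{Ext}^{i}_\Gamma(\mathcal{E}S_i,\mathcal{E}U')\cong\operatorname{Ext}^i_\Lambda(S_i,U')=0 \quad (i\ge1),
\]
using Lemma \ref{lem:ext_isomorphisms}(1) and the hypothesis $\operatorname{Ext}^{>0}_\Lambda(S_i,U')=0$.

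For $i=1$ we look at the initial segment
\[
\operatorname{Hom}_\Gamma(P_a,\mathcal{E}U')\to\operatorname{Hom}_\Gamma(\mathcal{E}S_i,\mathcal{E}U')\to\operatorname{Ext}^1_\Gamma(S_a,\mathcal{E}U')\to 0 .
\]
The middle term is $\operatorname{Hom}_\Lambda(S_i,U')=0$ by hypothesis, so $\operatorname{Ext}^1_\Gamma(S_a,\mathcal{E}U')=0$. It is worth noting that $\operatorname{Hom}_\Gamma(P_a,\mathcal{E}U')$ is itself zero, because $\mathcal{E}U'$ has zero vector space at $a$. This term therefore cannot supply surjectivity, which is exactly why the Hom hypothesis is needed.

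The main thing to double-check is that $P_a$ really is injective, so that $\operatorname{Ext}^{>0}_\Gamma(S_a,P_a)$ and $\operatorname{Ext}^{>0}_\Gamma(\mathcal{E}U',P_a)$ vanish. The paper asserts this in the paragraph before the lemma. As a fallback, use Lemma \ref{lem:ext_isomorphisms}(2) for $\operatorname{Ext}^{i}_\Gamma(\mathcal{E}U',P_a)\cong\operatorname{Ext}^i_\Lambda(U',S_i)$, which is $0$ because $S_i$ is injective over $\Lambda$. For $\operatorname{Ext}^{i}_\Gamma(S_a,P_a)$, shift along the sequence above; it reduces to $\operatorname{Ext}_\Gamma(\mathcal{E}S_i,P_a)\cong\operatorname{Ext}_\Lambda(S_i,S_i)$ in positive degrees, which vanishes for the same reason. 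In degree one the map $\operatorname{Hom}_\Gamma(P_a,P_a)\to\operatorname{Hom}_\Gamma(\mathcal{E}S_i,P_a)\cong k$ is surjective, since the identity restricts to the inclusion.
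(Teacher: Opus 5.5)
Your proof is correct and is essentially the paper's argument. The decisive step is identical: you compute $\operatorname{Ext}^{>0}_\Gamma(S_a,\mathcal{E}U')$ by applying $\operatorname{Hom}_\Gamma(-,\mathcal{E}U')$ to $0\to\mathcal{E}S_i\to P_a\to S_a\to 0$ and use $\operatorname{Hom}_\Lambda(S_i,U')=0$ in degree one, and your fallback for the two terms with $P_a$ in the second variable is exactly how the paper treats them. Your shortcut via injectivity of $P_a$ is also legitimate: $P_a=(S_i,k,\mathrm{id})\cong(S_i,\operatorname{Hom}_\Lambda(S_i,S_i),\mathrm{ev})$ is the image of the injective $\Lambda$-module $S_i$ under the right adjoint of the exact functor $\mathcal{R}$, and is therefore injective.
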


\begin{proof}  Since $P_a$ is a projective $\Gamma$-module and $S_a$ is an injective $\Gamma$-module, we have $\operatorname{Ext}_\Gamma^j(P_a, -) = 0$  and $\operatorname{Ext}_\Lambda^j(U', S_i) = 0$ for all $j > 0$.	Thus, 
\[
\operatorname{Ext}^i_\Gamma(\widetilde{U}, \widetilde{U}) \cong \operatorname{Ext}^i_\Gamma(\mathcal{E}U', \mathcal{E}U') \oplus \operatorname{Ext}^i_\Gamma(\mathcal{E}U', P_a) \oplus \operatorname{Ext}^i_\Gamma(S_a,\mathcal{E}U')\oplus \operatorname{Ext}^i_\Gamma(S_a,P_a).
\]	
By the self-orthogonality of $U'$, we obtain $\operatorname{Ext}_\Gamma^j(\mathcal{E}U', \mathcal{E}U') \cong \operatorname{Ext}_\Lambda^j(U', U') = 0$. Lemma \ref{lem:ext_isomorphisms} implies	$\operatorname{Ext}_\Gamma^j(\mathcal{E}U', P_a) \cong \operatorname{Ext}_\Lambda^j(U', S_i) = 0$. Applying the functor $\operatorname{Hom}_\Gamma(-, \mathcal{E}U')$ to the canonical short exact sequence $0 \to \mathcal{E}S_i \to P_a \to S_a \to 0$ yields:
$$ \operatorname{Hom}_\Gamma(P_a, \mathcal{E}U') \to \operatorname{Hom}_\Gamma(\mathcal{E}S_i, \mathcal{E}U') \to \operatorname{Ext}_\Gamma^1(S_a, \mathcal{E}U') \to\operatorname{Ext}_\Gamma^1(P_a, \mathcal{E}U')=0 . $$
Since $P_a$ is the projective cover of $S_a$ at vertex $a$, and the module $\mathcal{E}U'$ is supported entirely on the subalgebra $\Lambda$ (meaning its vector space at vertex $a$ is zero), any morphism $P_a \to \mathcal{E}U'$ must be zero. Thus, $\operatorname{Hom}_\Gamma(P_a, \mathcal{E}U') = 0$.
This immediately yields an isomorphism $\operatorname{Ext}_\Gamma^1(S_a, \mathcal{E}U') \cong \operatorname{Hom}_\Gamma(\mathcal{E}S_i, \mathcal{E}U') \cong \operatorname{Hom}_\Lambda(S_i, U')$, which vanishes by our assumption.
For $j >0 $, dimension shifting gives $\operatorname{Ext}_\Gamma^{j+1}(S_a, \mathcal{E}U') \cong \operatorname{Ext}_\Gamma^j(\mathcal{E}S_i, \mathcal{E}U') \cong \operatorname{Ext}_\Lambda^j(S_i, U')$, which explicitly vanishes by our assumption again. This implies $\operatorname{Ext}^i_\Gamma(S_a,\mathcal{E}U')=0$ for all  $i >0 $. Similarly, applying the functor $\operatorname{Hom}_\Gamma(-, P_a)$ to the same sequence yields:
$$ 0 =\operatorname{Hom}_\Gamma(S_a, P_a) \to \operatorname{Hom}_\Gamma(P_a, P_a) \xrightarrow{\alpha} \operatorname{Hom}_\Gamma(\mathcal{E}S_i, P_a) \to \operatorname{Ext}_\Gamma^1(S_a, P_a) \to 0. $$
Since $P_a$ is indecomposable, $\operatorname{End}_\Gamma(P_a) \cong k$. By adjunction, $\operatorname{Hom}_\Gamma(\mathcal{E}S_i, P_a) \cong \operatorname{Hom}_\Lambda(S_i, \mathcal{R}P_a) \cong \operatorname{Hom}_\Lambda(S_i, S_i) \cong k$. The   map $\alpha$ must necessarily be an isomorphism. Hence, its cokernel $\operatorname{Ext}_\Gamma^1(S_a, P_a) = 0$.
For $j >0 $, dimension shifting gives $\operatorname{Ext}_\Gamma^{j+1}(S_a, P_a)  \cong \operatorname{Ext}_\Gamma^j(\mathcal{E}S_i, P_a)\cong \operatorname{Ext}_\Lambda^j(S_i, S_i)=0$ since $S_i$ is an injective $\Lambda$-module. Finally, all component extensions vanish, proving that $\operatorname{Ext}_\Gamma^{>0}(\widetilde{U}, \widetilde{U}) = 0$.
\end{proof}

\begin{proposition}\label{3.5}
Let $\Gamma=\Lambda[S_i]$ be a source point extension of a representation-finite algebra $\Lambda$	
and $U$   a  PWT module in $\operatorname{mod} \Lambda$. If the simple injective module $S_i$ is a direct summand of $U$, we decompose $U = U' \oplus S_i$. Then the $\Gamma$-module
$$ \widetilde{U} = \mathcal{E}U' \oplus P_a \oplus S_a $$
is a PWT module in $\operatorname{mod} \Gamma$.
\end{proposition}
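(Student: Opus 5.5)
Since $\Gamma$ is a source point extension of a representation-finite algebra, $\Gamma$ is again representation-finite: its indecomposables are exactly $P_a$, $S_a$ and the $\mathcal{E}X$ with $X$ an indecomposable $\Lambda$-module. By Lemma \ref{lem:pwt_max_orthogonal} it therefore suffices to prove two things: that $\widetilde{U}$ is self-orthogonal, and that $|\widetilde{U}| = |\Gamma|$.

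\textbf{Counting summands.} This step is immediate once $U$ is taken basic (or once distinct indecomposable summands are counted). We have $|U'| = |U|-1 = |\Lambda|-1$. The modules $\mathcal{E}$ of pairwise non-isomorphic indecomposables remain pairwise non-isomorphic, since $\mathcal{E}$ is fully faithful. None of them is isomorphic to $P_a$ or $S_a$, because those are not supported on $\Lambda$ alone. Also $P_a\not\cong S_a$, since $\mathcal{R}P_a = S_i \neq 0$. Hence
\[
|\widetilde{U}| = (|\Lambda|-1)+2 = |\Lambda|+1 = |\Gamma|.
\]

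\textbf{Self-orthogonality.} I will apply Lemma \ref{lem:self-ortho} to $U'$. Self-orthogonality of $U'$ follows because it is a summand of the self-orthogonal module $U$. It remains to verify two hypotheses.
\begin{enumerate}
\item $\operatorname{Ext}^{>0}_\Lambda(S_i,U')=0$. This holds because $S_i$ and $U'$ are both summands of $U$, and $U$ is self-orthogonal.
\item $\operatorname{Hom}_\Lambda(S_i,U')=0$. This is where the real work lies. Since $i$ is a source, $S_i$ is simple injective. Any nonzero map $S_i\to Z$ is a monomorphism, and because $S_i$ is injective it splits, so $S_i$ becomes a direct summand of $Z$. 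Hence the hypothesis can fail only if $U'$ itself contains $S_i$ as a summand. Taking $U$ basic with $U=U'\oplus S_i$, the module $U'$ has no summand $S_i$, and so $\operatorname{Hom}_\Lambda(S_i,U')=0$.
\end{enumerate}
If $U$ is not assumed basic, I would first pass to its basic version. This is harmless: the PWT property and the statement depend only on $\operatorname{add}U$, and I would interpret $U'$ as the complement of all copies of $S_i$.

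Lemma \ref{lem:self-ortho} then shows that $\widetilde{U}$ is self-orthogonal. Combined with the summand count above, Lemma \ref{lem:pwt_max_orthogonal} (applied to the representation-finite algebra $\Gamma$) shows that $\widetilde{U}$ is PWT.

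The main obstacle is the vanishing of $\operatorname{Hom}_\Lambda(S_i,U')$, which depends on $S_i$ being injective together with the basicness convention. A secondary point is justifying that $\Gamma$ is representation-finite, which I take from the description of the indecomposable $\Gamma$-modules stated after Definition \ref{def:source_point}.
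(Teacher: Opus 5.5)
Your proposal is correct and follows essentially the same route as the paper. You verify the hypotheses of Lemma \ref{lem:self-ortho}, with $\operatorname{Hom}_\Lambda(S_i,U')=0$ coming from $S_i$ being simple injective and $U'$ having no summand $S_i$; then you count $|\widetilde{U}|=|\Lambda|+1=|\Gamma|$ and invoke Lemma \ref{lem:pwt_max_orthogonal} over the representation-finite algebra $\Gamma$. Your explicit attention to basicness (which the statement genuinely needs, since an extra copy of $S_i$ in $U'$ would give $\operatorname{Ext}^1_\Gamma(S_a,\mathcal{E}U')\cong\operatorname{Hom}_\Lambda(S_i,U')\neq 0$) and to the pairwise non-isomorphism of the summands of $\widetilde{U}$ tightens the paper's argument, which leaves these implicit and contains the slip $|U|=|\Lambda|-1$.
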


\begin{proof}
Since $U = U' \oplus S_i$ is a PWT module over $\Lambda$. This directly implies that $\operatorname{Ext}_\Lambda^{>0}(U', U') = 0$ and $\operatorname{Ext}_\Lambda^{>0}(S_i, U') = 0$.
Furthermore, since $U'$ does not contain $S_i$ as a direct summand and $S_i$ is a simple module, any non-zero morphism $S_i \to U'$ must be a monomorphism. Since $S_i$ is also an injective $\Lambda$-module, this monomorphism must split, which implies $S_i$ is  a direct summand of $U'$. This contradiction ensures that $\operatorname{Hom}_\Lambda(S_i, U') = 0$. By Lemma \ref{lem:self-ortho}, the $\Gamma$-module $\widetilde{U} = \mathcal{E}U' \oplus P_a \oplus S_a$ is  self-orthogonal in $\operatorname{mod} \Gamma$. According to Lemma \ref{lem:pwt_max_orthogonal}, for representation-finite algebras, the class of PWT modules coincides with maximal self-orthogonal modules. Therefore $|U|=|\Lambda|-1$.
Note that 
$$ |\widetilde{U}| = |\mathcal{E}U'| + |P_a| + |S_a|= |U'| + 2 =(|U| - 1) +2 = |\Lambda| + 1 = |\Gamma|. $$
We have $\widetilde{U}$ is a maximal self-orthogonal module.  $\Gamma$ is also representation-finite algebra, and thus $\widetilde{U}$  is a PWT module in $\operatorname{mod} \Gamma$.
\end{proof}

We now state the main bijection theorem. Let $\mathrm{PWT}(-)$ denote the set of isomorphism classes of basic PWT modules. We define $\mathrm{PWT}(\Lambda, S_i)$ as the subset of $\mathrm{PWT}(\Lambda)$ consisting of all basic PWT modules over $\Lambda$ that explicitly contain $S_i$ as a direct summand. Now, let $\mathrm{RPWT}(\Lambda, S_i)$ be a set consisting of all basic modules formed by removing the direct summand $S_i$ from modules in $\mathrm{PWT}(\Lambda, S_i)$. Clearly, if $\Lambda$ is a representation-finite  algebra, then
$$\mathrm{RPWT}(\Lambda, S_i)=\{U'\in\mod\Lambda||U'|=|\Lambda|-1,U'\oplus S_i\in\mathrm{PWT}(\Lambda, S_i)\}.$$

\begin{theorem} \label{thm:bijection}
Let $\Gamma=\Lambda[S_i]$ be a source point extension of a representation-finite  algebra $\Lambda$. There is a bijection:
$$ \mathrm{PWT}(\Gamma) \longleftrightarrow \mathrm{PWT}(\Lambda) \coprod \mathrm{RPWT}(\Lambda, S_i). $$
Consequently, the numbers of  PWT modules satisfy:
$$ |\mathrm{PWT}(\Gamma)| = |\mathrm{PWT}(\Lambda)| + |\mathrm{RPWT}(\Lambda, S_i)|= |\mathrm{PWT}(\Lambda)| + |\mathrm{PWT}(\Lambda, S_i)|. $$
\end{theorem}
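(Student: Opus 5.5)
We will exhibit two explicit maps into $\mathrm{PWT}(\Gamma)$, show that they are injective with disjoint images, and then show that together they are surjective. The lifting constructions already proved supply the maps. The classification of indecomposable $\Gamma$-modules (namely $P_a$, $S_a$ and the $\mathcal{E}X$ with $X$ indecomposable over $\Lambda$) supplies surjectivity. Lemma \ref{lem:pwt_max_orthogonal} reduces every PWT check to self-orthogonality plus a count of summands.

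\textbf{The two maps.} Define $\Phi_1\colon \mathrm{PWT}(\Lambda)\to\mathrm{PWT}(\Gamma)$ by $U\mapsto \mathcal{E}U\oplus P_a$. This is well defined by Theorem \ref{thm:main_construction_proj}: here $M=S_i$ is injective over $\Lambda$, so $S_i\in U^{\perp_\Lambda}$ automatically. Define $\Phi_2\colon \mathrm{RPWT}(\Lambda,S_i)\to\mathrm{PWT}(\Gamma)$ by $U'\mapsto \mathcal{E}U'\oplus P_a\oplus S_a$, which is well defined by Proposition \ref{3.5}. Both maps send basic modules to basic modules, since $\mathcal{E}$ is fully faithful and $P_a,S_a$ are not in the image of $\mathcal{E}$.

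\textbf{Injectivity and disjointness.} Both maps are injective because $\mathcal{R}\mathcal{E}=\mathrm{id}$ recovers $U$, respectively $U'$, from the $\mathcal{E}$-part. Their images are disjoint because only the images of $\Phi_2$ contain $S_a$ as a summand.

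\textbf{Surjectivity.} Let $T\in\mathrm{PWT}(\Gamma)$ be basic. Since $P_a$ is projective-injective, Remark \ref{rem:proj_inj_summands} forces $P_a$ to be a summand of $T$. By the list of indecomposables, either $T=\mathcal{E}U\oplus P_a$ or $T=\mathcal{E}U\oplus P_a\oplus S_a$ for some basic $U\in\mod\Lambda$. In both cases $U$ is self-orthogonal by Lemma \ref{lem:ext_isomorphisms}(1). Using $|T|=|\Gamma|=|\Lambda|+1$, we get $|U|=|\Lambda|$ in the first case, so $U\in\mathrm{PWT}(\Lambda)$ by Lemma \ref{lem:pwt_max_orthogonal}, and $T=\Phi_1(U)$.

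The second case is the main point: we must show $U\oplus S_i$ is a basic self-orthogonal module. I plan to reverse the computation in Lemma \ref{lem:self-ortho}. Apply $\operatorname{Hom}_\Gamma(-,\mathcal{E}U)$ to $0\to\mathcal{E}S_i\to P_a\to S_a\to 0$. Using $\operatorname{Hom}_\Gamma(P_a,\mathcal{E}U)=0$ and the projectivity of $P_a$, this gives
\[
\operatorname{Hom}_\Lambda(S_i,U)\cong\operatorname{Ext}^1_\Gamma(S_a,\mathcal{E}U)
\quad\text{and}\quad
\operatorname{Ext}^j_\Lambda(S_i,U)\cong\operatorname{Ext}^{j+1}_\Gamma(S_a,\mathcal{E}U)\ \text{ for } j\ge1.
\]
Both right-hand sides vanish because $T$ is self-orthogonal.
\begin{itemize}
\item $\operatorname{Hom}_\Lambda(S_i,U)=0$ shows that $S_i$ is not a summand of $U$, so $U\oplus S_i$ is basic with $|\Lambda|$ summands.
\item $\operatorname{Ext}^{>0}_\Lambda(S_i,U)=0$ holds by the second isomorphism.
\item $\operatorname{Ext}^{>0}_\Lambda(U,S_i)=0$ and $\operatorname{Ext}^{>0}_\Lambda(S_i,S_i)=0$ hold by the injectivity of $S_i$.
\end{itemize}
Hence $U\oplus S_i$ is maximal self-orthogonal, so it lies in $\mathrm{PWT}(\Lambda,S_i)$ by Lemma \ref{lem:pwt_max_orthogonal}. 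Therefore $U\in\mathrm{RPWT}(\Lambda,S_i)$ and $T=\Phi_2(U)$.

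\textbf{Counting.} The counting formula follows from the bijection. The map $\mathrm{PWT}(\Lambda,S_i)\to\mathrm{RPWT}(\Lambda,S_i)$ that removes $S_i$ is a bijection, with inverse $U'\mapsto U'\oplus S_i$; this uses that the modules are basic. Hence $|\mathrm{RPWT}(\Lambda,S_i)|=|\mathrm{PWT}(\Lambda,S_i)|$.
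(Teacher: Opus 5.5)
Your proposal is correct and follows essentially the same route as the paper. The two maps come from Theorem~\ref{thm:main_construction_proj} and Proposition~\ref{3.5}, and their images are told apart by the summand $S_a$. Surjectivity uses that the projective-injective $P_a$ must be a summand, the count $|T|=|\Lambda|+1$, and the long exact sequence of $\operatorname{Hom}_\Gamma(-,\mathcal{E}U)$ on $0\to\mathcal{E}S_i\to P_a\to S_a\to 0$. You are somewhat more explicit than the paper: you check $\operatorname{Ext}^{>0}_\Lambda(U\oplus S_i,U\oplus S_i)=0$ term by term, exhibit $T=\Phi_1(U)$ or $T=\Phi_2(U)$ directly, and justify $|\mathrm{RPWT}(\Lambda,S_i)|=|\mathrm{PWT}(\Lambda,S_i)|$. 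These are presentational differences only.
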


\begin{proof}Firstly,
for any $U \in \mathrm{PWT}(\Lambda)$,  $\mathrm{Ext}_\Lambda^{>0}(U, S_i) = 0$ holds  since $S_i$ is an injective $\Lambda$-module. By Theorem \ref{thm:main_construction_proj}, we map $U \mapsto \mathcal{E}U \oplus P_a$, which is a PWT module in $\mathrm{mod}\,\Gamma$. For any $U' \in \mathrm{RPWT}(\Lambda, S_i)$, we have $U' \oplus S_i \in \mathrm{PWT}(\Lambda, S_i)$. By Theorem \ref{3.5}, we map $U' \mapsto \mathcal{E}U' \oplus P_a \oplus S_a$, which is also a PWT module in $\mathrm{mod}\,\Gamma$. Since the modules in the first class do not contain the simple injective $\Gamma$-module $S_a$ as their direct summand, while those in the second class explicitly do, the images of the two maps are strictly disjoint. Thus, the forward mapping is well-defined and injective.

Secondly, Let $\widetilde{T} \in \mathrm{PWT}(\Gamma)$. Since  $\Lambda$ is representation-finite, we have $\Gamma$ is also representation-finite and $\widetilde{T}$ is a maximal self-orthogonal module, meaning $|\widetilde{T}| = |\Gamma| = |\Lambda| + 1$.
 Note that $P_a$ a projective-injective $\Gamma$-module, Remark \ref{rem:proj_inj_summands} implies any PWT module must contain all projective-injective modules as direct summands. Hence, $P_a$ is a direct summand of $\widetilde{T}$. Any other summand of $\widetilde{T}$ is either $S_a$ or belongs to $\mathcal{E}(\mathrm{mod}\,\Lambda)$. 

\textbf{Case 1:} $S_a \notin \mathrm{add}\,\widetilde{T}$.
Then $\widetilde{T} = \mathcal{E}U \oplus P_a$ for some basic $\Lambda$-module $U$. Since $\widetilde{T}$ is self-orthogonal in $\mathrm{mod}\,\Gamma$, $\mathcal{E}U$ is self-orthogonal in $\mathrm{mod}\,\Gamma$ which implies  $U$ must be self-orthogonal in $\mathrm{mod}\,\Lambda$. Moreover, $|U| = |\mathcal{E}U| = |\widetilde{T}| - 1 = |\Lambda|$. This implies $U$ is a maximal self-orthogonal module, hence $U \in \mathrm{PWT}(\Lambda)$.

\textbf{Case 2:} $S_a \in \mathrm{add}\,\widetilde{T}$.
Then $\widetilde{T} = \mathcal{E}U' \oplus P_a \oplus S_a$ for some basic $\Lambda$-module $U'$. The self-orthogonality of $\widetilde{T}$ implies $\mathrm{Ext}_\Lambda^{>0}(U', U') = 0$.
Applying the functor $\mathrm{Hom}_\Gamma(-, \mathcal{E}U')$ to the canonical short exact sequence $0 \to \mathcal{E}S_i \to P_a \to S_a \to 0$ yields a long exact sequence. Since $P_a$ is projective, $\mathrm{Ext}_\Gamma^{>0}(P_a, \mathcal{E}U') = 0$. By dimension shifting, $\mathrm{Ext}_\Lambda^j(S_i, U') \cong \mathrm{Ext}_\Gamma^j(\mathcal{E}S_i, \mathcal{E}U') \cong \mathrm{Ext}_\Gamma^{j+1}(S_a, \mathcal{E}U')$. Because $\widetilde{T}$ is self-orthogonal, $\mathrm{Ext}_\Gamma^{j+1}(S_a, \mathcal{E}U') = 0$ for all $j >0$, which strictly forces $\mathrm{Ext}_\Lambda^{>0}(S_i, U') = 0$. Moreover, for $j=0$, the exact sequence yields:
$$ 0 \longrightarrow \mathrm{Hom}_\Gamma(S_a, \mathcal{E}U') \longrightarrow \mathrm{Hom}_\Gamma(P_a, \mathcal{E}U') \longrightarrow \mathrm{Hom}_\Gamma(\mathcal{E}S_i, \mathcal{E}U') \longrightarrow \mathrm{Ext}_\Gamma^1(S_a, \mathcal{E}U') \longrightarrow 0. $$
Note that  $\mathcal{E}U'$ is supported entirely on $\Lambda$,  $P_a$  has no non-zero morphisms to $\mathcal{E}U'$, meaning $\mathrm{Hom}_\Gamma(P_a, \mathcal{E}U') = 0$. Since $\widetilde{T}$ is self-orthogonal, $\mathrm{Ext}_\Gamma^1(S_a, \mathcal{E}U') = 0$. So, we have $\mathrm{Hom}_\Lambda(S_i, U') \cong \mathrm{Hom}_\Gamma(\mathcal{E}S_i, \mathcal{E}U') = 0$.
Now consider the $\Lambda$-module $T = U' \oplus S_i$. Because $\mathrm{Hom}_\Lambda(S_i, U') = 0$, $S_i$ is not a direct summand of $U'$, making $T$ a strictly basic module. It is easy to verify its self-orthogonality in $\mathrm{mod}\,\Lambda$. Moreover, $|T| = |U'| + 1 = (|\widetilde{T}| - 2) + 1 = |\Lambda|$. Thus, $T$ is a maximal self-orthogonal module in $\mathrm{mod}\,\Lambda$ that explicitly contains $S_i$. Therefore, $T \in \mathrm{PWT}(\Lambda, S_i)$ which shows $U' \in \mathrm{RPWT}(\Lambda, S_i)$. 

 It is immediately clear that the forward mapping is also injective

Finally, both $\mathrm{PWT}(\Gamma)$ and  $\mathrm{PWT}(\Lambda)$  are finite sets, establishing the bijection and confirming the combinatorial formula.\end{proof}


\begin{remark}\label{rem:comparison_ice}
In classical tilting theory, the number of tilting modules does not increase under source point extensions; specifically, $|\mathrm{tilt}\,\Gamma| = |\mathrm{tilt}\,\Lambda|$ \cite{AHT07, Gao22}. This is because, in the extension $\Gamma = \Lambda[S_i]$, the new simple module $S_a$ generally has projective dimension $\mathrm{pd}_\Gamma S_a \ge 2$ (unless $S_i$ is projective). Limiting the projection dimension to at most one in the bijection of Theorem \ref{thm:bijection}, we also obtain the bijection between $\mathrm{tilt}\,\Gamma = \mathrm{tilt}\,\Lambda$.
\end{remark}

\section{Examples}\label{sec4}

In this section, we provide examples to illustrate our results.

\begin{example} \label{ex:4.1}
Let $\Lambda$ be the Nakayama algebra given by the quiver $2 \xrightarrow{\alpha} 3 \xrightarrow{\beta} 4$ with the relation $\alpha\beta=0$. The indecomposable modules over $\Lambda$ are:
$$ P_2 = \begin{smallmatrix} 2 \\ 3 \end{smallmatrix}, \quad P_3 = \begin{smallmatrix} 3 \\ 4\end{smallmatrix}, \quad P_4 = \begin{smallmatrix} 4 \end{smallmatrix}, \quad S_2 = \begin{smallmatrix} 2  \end{smallmatrix}, \quad S_3 = \begin{smallmatrix} 3 \end{smallmatrix}. $$
 Since $\Lambda$ is representation-finite, PWT modules coincide with maximal self-orthogonal modules. By Remark \ref{rem:proj_inj_summands}, any PWT module over $\Lambda$ must contain all projective-injective modules, meaning $P_2$ and $P_3$ must appear as direct summands in every PWT module. Thus, any PWT module over $\Lambda$ is of the form $ \begin{smallmatrix} 2 \\ 3 \end{smallmatrix} \oplus \begin{smallmatrix} 3 \\ 4 \end{smallmatrix} \oplus X$ for some indecomposable module $X \in \{ \begin{smallmatrix} 4 \end{smallmatrix},  \begin{smallmatrix} 3 \end{smallmatrix},  \begin{smallmatrix} 2  \end{smallmatrix}\}$.
It is straightforward to check that all three choices yield strictly self-orthogonal modules, giving exactly 3 basic PWT modules over $\Lambda$:
$$ U_1 = \begin{smallmatrix} 2 \\ 3 \end{smallmatrix} \oplus \begin{smallmatrix} 3 \\ 4 \end{smallmatrix}  \oplus \begin{smallmatrix} 4  \end{smallmatrix},\quad U_2 = \begin{smallmatrix} 2 \\ 3 \end{smallmatrix} \oplus \begin{smallmatrix} 3 \\ 4 \end{smallmatrix}  \oplus \begin{smallmatrix} 3 \end{smallmatrix}, \quad U_3 = \begin{smallmatrix} 2 \\ 3 \end{smallmatrix} \oplus \begin{smallmatrix} 3 \\ 4 \end{smallmatrix}  \oplus \begin{smallmatrix} 2  \end{smallmatrix}. $$
Thus, $|\mathrm{PWT}(\Lambda)| = 3$ and  $\mathrm{PWT}(\Lambda, S_2) = \{U_3\}$, yielding $\mathrm{RPWT}(\Lambda, S_2) =\{\begin{smallmatrix} 2 \\ 3 \end{smallmatrix} \oplus \begin{smallmatrix} 3 \\ 4 \end{smallmatrix}\}$.

Now, let $\Gamma = \Lambda[S_2]$ be the source point extension of $\Lambda$ by the simple injective module $M = S_2 = \begin{smallmatrix} 2  \end{smallmatrix}$. Then  $\Gamma$ is the algebra given by the quiver $1 \xrightarrow{\gamma} 2 \xrightarrow{\alpha} 3 \xrightarrow{\beta} 4$ with relations $\gamma\alpha=0$ and $\alpha\beta=0$.
The new indecomposable projective $\Gamma$-module at vertex $1$ is $P_1 = \begin{smallmatrix} 1 \\ 2 \end{smallmatrix}$, and the new simple injective module is $S_1 = \begin{smallmatrix} 1 \end{smallmatrix}$. Theorem \ref{thm:bijection} implies
$$ |\mathrm{PWT}(\Gamma)| = |\mathrm{PWT}(\Lambda)| + |\mathrm{RPWT}(\Lambda, S_2)| = 3 + 1 = 4. $$
These 4 PWT $\Gamma$-modules is as follows:
$$ \widetilde{U}_1 = \mathcal{E}U_1 \oplus P_1 = \begin{smallmatrix} 2 \\ 3 \end{smallmatrix} \oplus \begin{smallmatrix} 3 \\ 4 \end{smallmatrix} \oplus\begin{smallmatrix}4 \end{smallmatrix} \oplus \begin{smallmatrix} 1 \\ 2 \end{smallmatrix}, $$
$$\widetilde{U}_2 = \mathcal{E}U_2 \oplus P_1 =\begin{smallmatrix} 2 \\ 3 \end{smallmatrix} \oplus \begin{smallmatrix} 3 \\ 4 \end{smallmatrix} \oplus \begin{smallmatrix}  3 \end{smallmatrix} \oplus \begin{smallmatrix} 1 \\ 2 \end{smallmatrix}, $$
  $$  \widetilde{U}_3 = \mathcal{E}U_3 \oplus P_1 = \begin{smallmatrix} 2 \\ 3 \end{smallmatrix} \oplus \begin{smallmatrix} 3 \\ 4 \end{smallmatrix}\oplus \begin{smallmatrix} 2 \end{smallmatrix} \oplus \begin{smallmatrix} 1 \\ 2 \end{smallmatrix},$$
  $$ \widetilde{U}_4 =\begin{smallmatrix} 2 \\ 3 \end{smallmatrix} \oplus \begin{smallmatrix} 3 \\ 4 \end{smallmatrix} \oplus P_1\oplus S_1=\begin{smallmatrix} 2 \\ 3 \end{smallmatrix} \oplus \begin{smallmatrix} 3 \\ 4 \end{smallmatrix} \oplus \begin{smallmatrix} 1 \\ 2 \end{smallmatrix} \oplus \begin{smallmatrix} 1 \end{smallmatrix}. $$

\end{example}

\begin{example} \label{ex:4.2}
We show that the mutation preservation under the same source point extension $\Gamma = \Lambda[S_2]$ as in Example \ref{ex:4.1}.
Consider the  PWT module over $\Lambda$:
$$ U_1 = \begin{smallmatrix} 2 \\ 3 \end{smallmatrix} \oplus \begin{smallmatrix} 3 \\ 4 \end{smallmatrix}\oplus \begin{smallmatrix} 4\end{smallmatrix}, $$
and mutate it with respect to the indecomposable summand $\begin{smallmatrix} 4\end{smallmatrix}$.
The minimal left $\mathrm{add}(\begin{smallmatrix} 2 \\ 3 \end{smallmatrix} \oplus \begin{smallmatrix} 3 \\ 4 \end{smallmatrix})$-approximation of $\begin{smallmatrix} 4\end{smallmatrix}$ in $\mathrm{mod}\,\Lambda$ is exactly the  monomorphism $\begin{smallmatrix} 4\end{smallmatrix} \hookrightarrow \begin{smallmatrix} 3\\4\end{smallmatrix}$. The short exact sequence is:
$$ 0 \longrightarrow \begin{smallmatrix} 4\end{smallmatrix} \longrightarrow \begin{smallmatrix} 3\\4\end{smallmatrix} \longrightarrow \begin{smallmatrix} 3\end{smallmatrix} \longrightarrow 0. $$
Replacing the summand $\begin{smallmatrix} 4\end{smallmatrix}$ with the cokernel $\begin{smallmatrix} 3\end{smallmatrix}$, we obtain the left mutated module:
$$ U_2 = \begin{smallmatrix} 2\\3\end{smallmatrix} \oplus \begin{smallmatrix} 3\\4\end{smallmatrix} \oplus \begin{smallmatrix} 3\end{smallmatrix}. $$
Since $M = S_2 = 2$ is an injective $\Lambda$-module, the condition $\mathrm{Ext}_\Lambda^{>0}(-, S_2) = 0$ holds  for all modules. Consequently, the homological condition $M \in U_1^{\perp_\Lambda} \cap (U_2)^{\perp_\Lambda}$ required by Theorem \ref{thm:mutation_preservation} is satisfied. According to Theorem \ref{thm:mutation_preservation}, the  $\widetilde{U}_2$ must be a left mutation of $\widetilde{U}_1$ in $\mathrm{mod}\,\Gamma$. Their common direct summand is $\widetilde{U}_{\mathrm{com}} = \begin{smallmatrix} 2\\3\end{smallmatrix} \oplus \begin{smallmatrix} 3\\4\end{smallmatrix} \oplus \begin{smallmatrix} 1\\2\end{smallmatrix}.$
 The minimal left $\mathrm{add}(\widetilde{U}_{\mathrm{com}})$-approximation of
$\begin{smallmatrix} 4\end{smallmatrix} $ 
in $\mathrm{mod}\,\Gamma$  remains the  same monomorphism $\begin{smallmatrix} 4\end{smallmatrix} \hookrightarrow \begin{smallmatrix} 3\\4\end{smallmatrix}$, because there are no non-zero morphisms from the module $\begin{smallmatrix} 4\end{smallmatrix} $  to the new projective summand $\begin{smallmatrix} 1\\2\end{smallmatrix} $. Consequently, the cokernel remains $\begin{smallmatrix} 3\end{smallmatrix} $.
\end{example}


\begin{thebibliography}{99}
\bibitem[AIR14]{AIR14}
T. Adachi, O. Iyama, and I. Reiten,
\textit{$\tau$-tilting theory},
Compos. Math. \textbf{150} (2014), no. 3, 415--452.

\bibitem[AR91]{AR91}
M. Auslander and I. Reiten,
\textit{Applications of contravariantly finite subcategories},
Adv. Math. \textbf{86} (1991), no. 1, 111--152.


\bibitem[AHT07]{AHT07}
I. Assem, D. Happel, and S. Trepode,
\textit{Extending tilting modules to one-point extensions by projectives},
Comm. Algebra \textbf{35} (2007), no. 10, 2983--3006.



\bibitem[BS98]{BS98}
A. B. Buan, \O. Solberg,
\textit{Relative cotilting theory and almost complete cotilting modules},
Algebras and modules, II (Geiranger, 1996), 77--92, CMS Conf. Proc., \textbf{24}, Amer. Math. Soc., Providence, RI, 1998.

\bibitem[Eno23]{Eno23}
H. Enomoto,
\textit{Maximal self-orthogonal modules and a new generalization of tilting modules},
arXiv:2301.13498v2 [math.RT], 2023.

\bibitem[Gao22]{Gao22}
H. Gao,
\textit{$\tau$-tilting modules over one-point extensions by a simple module at a source point},
J. Algebra Appl. \textbf{21} (2022), no. 6,  Paper No. 2250122, 8 pp.


\bibitem[Sua18]{Sua18}
P. Suarez,
\textit{$\tau$-tilting modules over one-point extensions by a projective module},
Algebr. Represent. Theory  \textbf{21} (2018), no. 4, 769--786.

\bibitem[Wa90]{Wa90}
T. Wakamatsu,
\textit{Stable equivalence for self-injective algebras and a generalization of tilting modules},
J. Algebra \textbf{134} (1990), no. 2, 298--325.
\end{thebibliography}
\end{document}